\newtheorem{thm}{Theorem}[section]
\newtheorem{cor}[thm]{Corollary}
\newtheorem{lem}[thm]{Lemma}
\newtheorem{exm}{Example}
\newtheorem{prop}[thm]{Proposition}
\newtheorem{defn}[thm]{Definition}
\newtheorem{rem}[thm]{Remark}
\newtheorem{defn-prop}[thm]{Definition-Proposition}
\newcommand{\qihao}{\fontsize{7.25pt}{\baselineskip}\selectfont}
\begin{document}

\begin{center}
{\Large \bf Cluster automorphism groups and automorphism groups of exchange graphs

\bigskip
{\large Wen Chang
\footnote{Supported by Shaanxi Province, Shaanxi Normal University and the NSF of China (Grant 11601295)} and
 Bin Zhu\footnote{Supported by the NSF of China (Grant 11671221)}}}


\end{center}

\begin{abstract}
For a coefficient free cluster algebra $\mathcal{A}$, we study the cluster automorphism group $Aut(\mathcal{A})$ and the automorphism group $Aut(E_{\mathcal{A}})$ of its exchange graph $E_{\mathcal{A}}$. We show that these two groups are isomorphic with each other, if $\mathcal{A}$ is of finite type excepting types of rank two and type $F_4$, or if $\mathcal{A}$ is of skew-symmetric finite mutation type.
\end{abstract}

\def\s{\stackrel}
\def\Longrightarrow{{\longrightarrow}}

\def\ggz{\Gamma}
\def\bz{\beta}
\def\az{\alpha}
\def\gz{\gamma}
\def\da{\delta}
\def\zz{\zeta}
\def\thz{\theta}
\def\ra{\rightarrow}

\def\FS{\mathfrak{S}}
\def\F1{\mathfrak{F}}
\def\A{\mathcal{A}}
\def\B{\mathcal{B}}
\def\C{\mathcal{C}}
\def\D{\mathcal{D}}
\def\F{\mathcal{F}}
\def\H{\mathcal{H}}
\def\I{\mathcal {I}}
\def\P{\mathbb{P}}
\def\T{\mathbb{T}}
\def\R{\mathcal{R}}
\def\Si{\Sigma}
\def\S{\Sigma}
\def\L{\mathscr{L}}
\def\l{\mathcal{l}}
\def\U{\mathscr{U}}
\def\V{\mathscr{V}}
\def\W{\mathscr{W}}
\def\X{\mathscr{X}}
\def\Y{\mathscr{Y}}
\def\M{{\bf{Mut}}}
\def\MM#1{{\bf{(CM#1)}}}
\def\t{{\tau }}
\def\b{\textbf{d}}
\def\K{{\cal K}}
\def\righta{\rightarrow}
\def\G{{\Gamma}}
\def\x{\mathbf{x}}
\def\ex{{\mathbf{ex}}}
\def\fx{{\mathbf{fx}}}
\def\Aut{\mbox{Aut}}
\def\add{\mbox{add}}
\def\coker{\mbox{coker}}
\def\End{\mbox{End}}
\def\Ext{\mbox{Ext}}
\def\Gr{\mbox{Gr}}
\def\Hom{\mbox{Hom}}
\def\id{\mbox{id}}
\def\ind{\mbox{ind}}
\def\Int{\mbox{Int}}
\def\deg{\mbox{deg}}
\def \text{\mbox}
\def\m{\multiput}
\def\mul{\multiput}
\def\c{\circ}
\def \text{\mbox}
\def\t{\tilde}

\newcommand{\homeo}{\textup{Homeo}^+(S,M)}
\newcommand{\homeoo}{\textup{Homeo}_0(S,M)}
\newcommand{\mg}{\mathcal{MG}(S,M)}
\newcommand{\mmg}{\mathcal{MG}_{\bowtie}(S,M)}
\newcommand{\Z}{\mathbb{Z}}
\newcommand{\Q}{\mathbb{Q}}
\newcommand{\N}{\mathbb{N}}

\hyphenation{ap-pro-xi-ma-tion}

\textbf{Key words.} Cluster algebras; Exchange graphs; Cluster automorphism groups.
\medskip

\textbf{Mathematics Subject Classification.} 13F60; 05E40

\section{Introduction}
Cluster algebras are introduced by Sergey Fomin and Andrei Zelevinsky in \cite{FZ02}. In this paper we consider cluster algebras with trivial coefficients, which can be defined through a skew-symmetrizable square matrix. Such a cluster algebra is a $\Z$-subalgebra of rational function field with $n$ indeterminates. More precisely, a {\it seed} is a pair consisting of a set ({\it cluster}) of $n$ indeterminates ({\it cluster variables}) in the field and a skew-symmetrizable square matrix ({\it exchange matrix}) of size $n$. Starting from an initial seed, we get a new seed by an operate so called mutation. 
Then the cluster algebra is algebraic-generated by all the cluster variables obtained by iterated mutations. The cluster algebra has nice combinatorial structures which are (in some sense) given by mutations, and these structures are captured by its exchange graph, which is a graph with seeds as vertices and with mutations as edges. \\

We focus in this paper on two special types of cluster algebras: the {\it finite type} and the {\it finite mutation type}. Cluster algebras of finite type are those algebras with finite number of clusters. They are classified in \cite{FZ03}, which corresponds to the Killing-Cartan classification of complex semisimple Lie algebras, equivalently, corresponds to the classification of root systems in Euclidean space. If there are finite many matrix classes in the seeds of a cluster algebra, then we say it is of finite mutation type, where two matrices are in the same class if one of them can be obtained from the other by simultaneous relabeling of the rows and columns. The cluster algebras of finite mutation type with skew-symmetric exchange matrices are classified in \cite{FST12}, a large class of them arises from marked Riemann surfaces (possibly with boundary) \cite{FST08}, and there are 11 exceptional ones. The classification of skew-symmetrizable cluster algebras of finite mutation type is given in \cite{FST11} via operations so called unfoldings upon the skew-symmetric cluster algebras of finite mutation type.\\

We consider the relations in this paper between two groups associated to the cluster algebras. One is the cluster automorphism group consisting of {\it cluster automorphisms}, which are permutations of the clusters that commutate with mutations. This group is introduced in \cite{ASS12} for a coefficient free cluster algebra, and in \cite{CZ15} for a cluster algebra with coefficients, it reveals the combinatorial and algebraic symmetries of the cluster algebra. Another is the automorphism group of the exchange graph, which consists of graph automorphism of the exchange graph. This group describes the symmetries of the exchange graph, in other words, describes combinatorial symmetries of the cluster algebra. The problem that consider the relations between these two groups is stated in \cite{S14}.\\

The exchange graph is a fairly coarse invariant of a cluster algebra, e.g. all infinite type cluster algebras of rank 2 have the same exchange graph. This article suggests that, nonetheless, the exchange graph is already rich enough to capture most of the symmetries of the cluster algebra.\\

For a coefficient free cluster algebra $\A$ with exchange graph $E_\A$, we write the cluster automorphism group of $\A$ and the automorphism group of $E_\A$ as $Aut(\A)$ and $Aut(E_\A)$ respectively. In general, $Aut(\A)$ is a subgroup of $Aut(E_\A)$, and may be a proper subgroup, see Example \ref{exm:rank 2 case} \ref{exm:infinite rank 2 case2}. The main result of this paper is that these two groups are isomorphic with each other, if $\mathcal{A}$ is of finite type, excepting types of rank two and type $F_4$ (Theorem \ref{thm:auto gp of finite type ex graph}), or $\mathcal{A}$ is of skew-symmetric finite mutation type (Theorem \ref{thm:auto gp of finite mutation type ex graph}). Therefore in some degree, for these cluster algebras, the algebraic symmetries are also captured by the exchange graphs. In particular, we compute the automorphism group of the exchange graph of a finite type cluster algebra in table \ref{table:auto group of exchange graphs}, see Remark \ref{rem:auto-gp-of-finite-type}.
\begin{table}[ht]
\begin{equation*}
\begin{array}{cc}
\textrm{Dynkin type} & \textrm{Automorphism group $Aut(E_\A)$} \\
\hline
A_{n} (n\geqslant 2) & \mathbb{D}_{n+3}\\
B_{2} &\mathbb{D}_{6}\\
B_{n} (n\geqslant 3) & \mathbb{D}{n+1}\\
C_{2} & \mathbb{D}_{6}\\
C_n (n\geqslant 3) & \mathbb{D}_{n+1}\\
D_4 & \mathbb{D}_4\times S_3 \\
D_n (n\geqslant 5) & \mathbb{Z}_2\\
E_6 & \mathbb{D}_{14}\\
E_7 & \mathbb{D}_{10}\\
E_8 & \mathbb{D}_{16}\\
F_{4} & \mathbb{D}_7\rtimes \Z_2\\
G_{2} & \mathbb{D}_{8}\\
\end{array}
\end{equation*}
\smallskip
\caption{Automorphism groups of exchange graphs of cluster algebras of finite type}
\label{table:auto group of exchange graphs}
\end{table}

To prove these results, we describe $E_\A$ more precisely. In Section \ref{Layers of geodesic loops}, we define layers of geodesic loops of $E_\A$ by using the distance of a vertex to a fixed vertex on $E_\A$. An easy observation is that an isomorphism of exchange graphs should maintain the combinatorial numbers of the layers of geodesic loops based on the corresponding vertices see Remark \ref{rem: layers of geodesic loops}(4). By this observation, we directly show in Examples \ref{exm:A3 type exchange graph}, \ref{exm:B3 C3 type exchange graph}, \ref{exm:infinite rank 3 case}, \ref{exm:infinite rank 3 case 2} that for a cluster algebra of type $A_3$, $B_3$, $C_3$, $\tilde{A}_2$ or $T_3$ (the cluster algebra from an once punctured torus), we have $Aut(\A)\cong Aut(E_\A)$. For the general cases we reduce them to above five cases (Theorems \ref{thm:auto gp of finite type ex graph}, \ref{thm:auto gp of finite mutation type ex graph}).\\

The paper is organized as follows: we recall preliminaries on cluster algebras, cluster algebras of finite mutation type and cluster automorphisms in section \ref{Preleminaries}, then we prove the main Theorems in section \ref{Sec exchange graph}.

\section{Preliminaries}
\label{Preleminaries}

\subsection{Cluster algebras}
\begin{defn} \cite{FZ02}(Labeled seeds).\label{def: labeled seeds}
A \emph{labeled seed} is a pair $\Sigma=(\x, B)$, where
\begin{itemize}
\item  $\x=\{x_{1},x_{2},\cdot \cdot \cdot ,x_{n}\}$ is an ordered set of $n$ indeterminates;
\item  $B=(b_{x_jx_i})_{n\times n}\in M_{n\times n}(\Z)$ is a skew-symmetrizable matrix labeled by $\x\times \x$, that is, there exists a diagonal matrix $D$ with positive integer entries such that $DB$ is skew-symmetric.
\end{itemize}
\end{defn}
The set $\x$ is called the \emph{cluster} with elements the \emph{cluster variables}, and $B$ is called the \emph{exchange matrix}. An element $b_{x_jx_i}$ in $B$ is also written as $b_{ji}$ for brevity. We always assume through the paper that $B$ is indecomposable, that is, for any $1\leqslant i,j\leqslant n$, there is a sequence $i_0=i,i_1,\cdot \cdot \cdot, i_m, i_{m+1}=j$, such that $b_{i_{k},i_{k+1}}\neq 0$ for any $0\leqslant k\leqslant m$. We also assume that $n>1$ for convenience.
One may produce a new labeled seed by a mutation at direction $k$ for any cluster variable $x_k$.

\begin{defn} \cite{FZ02}(Seed mutations).\label{def: mutation}
The labeled seed $\mu_k(\S)=(\mu_k(\x),\mu_k(B))$ obtained by the \emph{mutation} of $\S$ in the direction $k$ is given by:
\begin{itemize}
				\item $\mu_k(\x) = (\x \setminus \{x_k\}) \sqcup \{\mu_{x_k,\x}(x_k)\}$ where
				$$x_k\mu_{x_k,\x}(x_k) = \prod_{\substack{1\leqslant j\leqslant n~; \\ b_{jk}>0}} {x_j}^{b_{jk}} + \prod_{\substack{1\leqslant j\leqslant n~; \\ b_{jk}<0}} {x_j}^{-b_{jk}}.$$
				\item $\mu_k(B)=(b'_{ji})_{n\times n} \in M_{n\times n}(\Z)$ is given by
					$$b'_{ji} = \left\{\begin{array}{ll}
						- b_{ji} & \textrm{ if } i=k \textrm{ or } j=k~; \\
						b_{ji} + \frac 12 (|b_{ji}|b_{ik} + b_{ji}|b_{ik}|) & \textrm{ otherwise.}
					\end{array}\right.$$
\end{itemize}
\end{defn}
It is easy to check that a mutation is an involution, that is $\mu_k\mu_k(\S)=\S$.

\begin{defn} \cite{FZ07}(n-cluster patterns).\label{def:n-regular patterns}
An n-regular tree $\T_n$ is a diagram, whose edges are labeled by $1,2,\cdots,n$, such that the $n$ edges emanating from each vertex receive different labels.
A \emph{n-cluster pattern} is an assignment
of a labeled seed $\Sigma_t=(\x_t, B_t)$ to every vertex $t \in \T_n$, so that the labeled seeds assigned to the
endpoints of any edge labeled by $k$ are obtained from each
other by the seed mutation in direction~$k$.
The elements of $\Sigma_t$ are written as follows:
\begin{equation}
\label{eq:seed-labeling}
\x_t = (x_{1;t}\,,\dots,x_{n;t})\,,\quad
B_t = (b^t_{ij})\,.
\end{equation}
\end{defn}
Note that $\T_n$ is in fact determined by any fixed labeled seed on it.
Now we are ready to define cluster algebras.
\begin{defn} \cite{FZ07}(Cluster algebras).\label{def: cluster algebras}
Given a seed $\S$ and a cluster pattern $\T_n$ associated to it, we denote
\begin{equation}
\label{eq:cluster-variables}
\X
= \bigcup_{t \in \T_n} \x_t
= \{ x_{i,t}\,:\, t \in \T_n\,,\ 1\leq i\leq n \} \ ,
\end{equation}
the union of clusters of all the seeds in the pattern.
We call the elements $x_{i,t}\in \X$ the \emph{cluster variables}.
The \emph{cluster algebra} $\A$ associated with $\S$ is the $\Z$-subalgebra of the rational function field $\F=\Q(x_{1},x_{2},\cdot \cdot \cdot ,x_{n})$,
generated by all cluster variables: $\A = \Z[\X]$.
\end{defn}

For a skew-symmetrizable matrix $B=(b_{ji})_{n\times n}$, one can associate it to a \emph{valued quiver} (quiver for brevity) $Q=(Q_0,Q_1,\upsilon)$ as follows: $Q_0=\{1,2,\cdot\cdot\cdot,n\}$ is a set of vertices; for any two vertices $j$ and $i$, if $b_{ji} > 0$, then there is an arrow $\alpha$ from $j$ to $i$, these arrows form the set $Q_1$; for an arrow $\alpha$ from $j$ to $i$, we assign it a pair of values: $(\upsilon_1(\alpha),\upsilon_2(\alpha))=(b_{ji},-b_{ij})$. Since $B$ is an indecomposable skew-symmetrizable matrix, the defined valued quiver $Q$ is connected and there is no loops nor $2$-cycles in $Q$. Then we can define a mutation of the valued quiver by the mutation of the matrix, we refer to \cite{FZ02,K12} for details. We say two quivers $Q$ and $Q'$ \emph{mutation equivalent}, if the corresponding matrices are mutation equivalent, that is, one of them can be obtained from the other one by a finite sequence of mutations. We also write $(\x ,Q)$ for the labeled seed $(\x, B)$, and write $\A_Q$ to the cluster algebra defined by $\Sigma$. The quiver and the defined cluster algebra are called skew-symmetric, if the corresponding matrix is skew-symmetric. If the cluster algebra is of finite type \cite{FZ03} or of skew-symmetric type, then the cluster determines the quiver \cite{GSV08}, and we denote the quiver of a cluster $\x$ by $Q(\x)$.

\begin{exm}\label{exm:quiver and matrix}
Let $B$ be the following matrix, it is a skew-symmetrizable matrix with skew-symmetrizer $D=diag\{2,2,1,1\}$. The quiver corresponding to $B$ is $Q$, where we always delete the trivial pairs of values $(1,1)$, and replace a arrow assigning pair $(m,m)$ by $m$ arrows.
\begin{center}
{\begin{tikzpicture}
\node[] (C) at (-3,-3)  {$B$
$~=~$
                        $\left(
                          \begin{array}{cccc}
                            0 & 1 & 0 & 0\\
                            -1 & 0 & -1 & 0\\
                            0 & 2 & 0 & 2\\
                            0 & 0 & -2 & 0\\
                          \end{array}
                        \right)$};
\end{tikzpicture}}
\end{center}

\begin{center}
{\begin{tikzpicture}
\node[] (C) at (-2.5,0)  {$Q~:$};
\node[] (C) at (-1.5,0)  {$1$};
\node[] (C) at (0,0)  {$2$};	
\node[] (C) at (1.5,0)  {$3$};
\node[] (C) at (3,0)  {$4$};
\node[] (C) at (0.75,0.3)  {\qihao{(2,1)}};
\draw[<-,thick] (-0.2,0) -- (-1.3,0);
\draw[<-,thick] (0.2,0) -- (1.3,0);
\draw[->,thick] (1.7,0.05) -- (2.8,0.05);
\draw[->,thick] (1.7,-0.05) -- (2.8,-0.05);
\end{tikzpicture}}
\end{center}
\end{exm}

\begin{defn} \cite{FZ07}(Seeds)
\label{def:seeds}
Given two labeled seeds $\Sigma=(\x, B)$
and $\Sigma'=(\x', B')$, we say that they
define the same \emph{seed}
if $\Sigma'$ is obtained from $\Sigma$ by simultaneous relabeling of the
sets $\x$ and the corresponding relabeling of
the rows and columns of $B$.
\end{defn}
We denote by $[\S]$ the seed represented by a labeled seed $\S$. The cluster $\x$ of a seed $[\S]$ is a unordered $n$-element set. For any $x\in\x$, there is a well-defined mutation $\mu_{x}([\S])=[\mu_{k}(\S)]$ of $[\S]$ at direction $x$, where $x=x_k$. For two same rank skew-symmetrizable matrices $B$ and $B'$, we say $B\cong B'$, if $B'$ is obtained from $B$ by simultaneous relabeling of the rows and columns of $B$. Then the exchange matrices in any two labeled seeds representing a same seed are isomorphic. The isomorphism of two exchange matrices induces an isomorphism of corresponding quivers. For convenience, in the rest of the paper, we also denote by $\S$ the seed $[\S]$ represented by $\S$.
\begin{defn} \cite{FZ07}(Exchange graphs)
\label{def:exchange-graph}
The \emph{exchange graph} of a cluster algebra is the $n$-regular graph whose vertices are the seeds of the cluster algebra and whose
edges connect the seeds related by a single mutation. We denote by $E_\A$ the exchange graph of a cluster algebra $\A$.
\end{defn}
Clearly, the exchange graph of a cluster algebra is a quotient graph of the $n$-regular tree, its vertices are equivalent classes of labeled seeds.
The exchange graph not necessary be a finite graph, if it is finite, then we say the corresponding cluster algebra (and its cluster pattern) is of \emph{finite type}.
\begin{defn} \cite[page 70]{FZ03}(Cluster complexes)
\label{def:cluster-cmoplex}
A cluster complex $\Delta$ of $\A$ is a simplicial complex on the ground set $\X$ with the clusters as the maximal simplices.
\end{defn}
Then $\Delta$ is an $n$-dimensional complex. In particular, if $\A$ is of finite type or skew-symmetric, then the vertices of $E_\A$ are clusters, thus the dual graph of $\Delta$ is $E_\A$.
\subsection{Finite types and finite mutation types}\label{sec:finite types and finite mutation types}
By the classification of cluster algebras of finite type \cite{FZ03}, a cluster algebra is of finite type if and only if there is a seed whose quiver is one of quivers depicted in Figure \ref{fig:quivers of finite types}.
Note that the underlying graphs of quivers in Figure \ref{fig:quivers of finite types} are trees, thus any two quivers with the same underlying graph are mutation-equivalent.\\
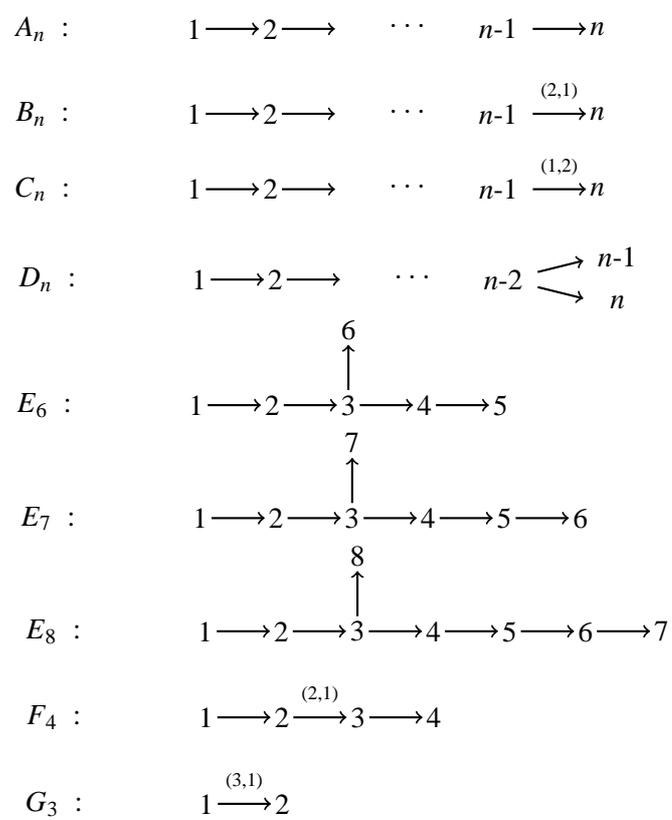
\begin{figure}
\begin{center}
{\begin{tikzpicture}

\node[] (C) at (0,0)  {\begin{tikzpicture}
\node[] (C) at (0,0)  {$A_n~:$};
\node[] (C) at (2,0)  {$1$};
\node[] (C) at (3,0)  {$2$};
\node[] (C) at (4.8,0)  {$\cdot\cdot\cdot$};
\node[] (C) at (6,0)  {$n$-$1$};
\node[] (C) at (7.3,0)  {$n$};
\draw[->,thick] (2.15,0) -- (2.85,0);
\draw[->,thick] (3.15,0) -- (3.85,0);
\draw[->,thick] (6.45,0) -- (7.15,0);
                        \end{tikzpicture}};

\node[] (C) at (0,-1)  {\begin{tikzpicture}
\node[] (C) at (0,0)  {$B_n~:$};
\node[] (C) at (2,0)  {$1$};
\node[] (C) at (3,0)  {$2$};
\node[] (C) at (4.8,0)  {$\cdot\cdot\cdot$};
\node[] (C) at (6,0)  {$n$-$1$};
\node[] (C) at (7.3,0)  {$n$};
\node[] (C) at (6.8,0.3)  {\qihao{(2,1)}};
\draw[->,thick] (2.15,0) -- (2.85,0);
\draw[->,thick] (3.15,0) -- (3.85,0);
\draw[->,thick] (6.45,0) -- (7.15,0);
                        \end{tikzpicture}};

\node[] (C) at (0,-2)  {\begin{tikzpicture}
\node[] (C) at (0,0)  {$C_n~:$};
\node[] (C) at (2,0)  {$1$};
\node[] (C) at (3,0)  {$2$};
\node[] (C) at (4.8,0)  {$\cdot\cdot\cdot$};
\node[] (C) at (6,0)  {$n$-$1$};
\node[] (C) at (7.3,0)  {$n$};
\node[] (C) at (6.8,0.3)  {\qihao{(1,2)}};
\draw[->,thick] (2.15,0) -- (2.85,0);
\draw[->,thick] (3.15,0) -- (3.85,0);
\draw[->,thick] (6.45,0) -- (7.15,0);
                        \end{tikzpicture}};

\node[] (C) at (0,-3.3)  {\begin{tikzpicture}
\node[] (C) at (-0.8,0)  {~~};
\node[] (C) at (0,0)  {$D_n~:$};
\node[] (C) at (2,0)  {$1$};
\node[] (C) at (3,0)  {$2$};
\node[] (C) at (4.8,0)  {$\cdot\cdot\cdot$};
\node[] (C) at (6,0)  {$n$-$2$};
\node[] (C) at (7.5,0.3)  {$n$-$1$};
\node[] (C) at (7.5,-0.3)  {$n$};
\draw[->,thick] (2.15,0) -- (2.85,0);
\draw[->,thick] (3.15,0) -- (3.85,0);
\draw[->,thick] (6.45,0.1) -- (7.05,0.25);
\draw[->,thick] (6.45,-0.1) -- (7.05,-0.25);
                        \end{tikzpicture}};

\node[] (C) at (0,-4.5)  {\begin{tikzpicture}
\node[] (C) at (0,0)  {$E_6~:$};
\node[] (C) at (2,0)  {$1$};
\node[] (C) at (3,0)  {$2$};
\node[] (C) at (4,0)  {$3$};
\node[] (C) at (5,0)  {$4$};
\node[] (C) at (6,0)  {$5$};
\node[] (C) at (4,1)  {$6$};
\draw[->,thick] (2.15,0) -- (2.85,0);
\draw[->,thick] (3.15,0) -- (3.85,0);
\draw[->,thick] (4.15,0) -- (4.85,0);
\draw[->,thick] (5.15,0) -- (5.85,0);
\draw[->,thick] (4,0.2) -- (4,0.8);
\node[] (C) at (7.3,0)  {~~};
                        \end{tikzpicture}};

\node[] (C) at (0,-6)  {\begin{tikzpicture}
\node[] (C) at (0,0)  {$E_7~:$};
\node[] (C) at (2,0)  {$1$};
\node[] (C) at (3,0)  {$2$};
\node[] (C) at (4,0)  {$3$};
\node[] (C) at (5,0)  {$4$};
\node[] (C) at (6,0)  {$5$};
\node[] (C) at (7,0)  {$6$};
\node[] (C) at (4,1)  {$7$};
\draw[->,thick] (2.15,0) -- (2.85,0);
\draw[->,thick] (3.15,0) -- (3.85,0);
\draw[->,thick] (4.15,0) -- (4.85,0);
\draw[->,thick] (5.15,0) -- (5.85,0);
\draw[->,thick] (6.15,0) -- (6.85,0);
\draw[->,thick] (4,0.2) -- (4,0.8);
\node[] (C) at (7.2,0)  {~~};
                        \end{tikzpicture}};

\node[] (C) at (0.4,-7.5)  {\begin{tikzpicture}
\node[] (C) at (0,0)  {$E_8~:$};
\node[] (C) at (2,0)  {$1$};
\node[] (C) at (3,0)  {$2$};
\node[] (C) at (4,0)  {$3$};
\node[] (C) at (5,0)  {$4$};
\node[] (C) at (6,0)  {$5$};
\node[] (C) at (7,0)  {$6$};
\node[] (C) at (8,0)  {$7$};
\node[] (C) at (4,1)  {$8$};
\draw[->,thick] (2.15,0) -- (2.85,0);
\draw[->,thick] (3.15,0) -- (3.85,0);
\draw[->,thick] (4.15,0) -- (4.85,0);
\draw[->,thick] (5.15,0) -- (5.85,0);
\draw[->,thick] (6.15,0) -- (6.85,0);
\draw[->,thick] (7.15,0) -- (7.85,0);
\draw[->,thick] (4,0.2) -- (4,0.8);
\node[] (C) at (-0.5,0)  {~~};
                        \end{tikzpicture}};

\node[] (C) at (-1,-9)  {\begin{tikzpicture}
\node[] (C) at (0,0)  {$F_4~:$};
\node[] (C) at (2,0)  {$1$};
\node[] (C) at (3,0)  {$2$};
\node[] (C) at (4,0)  {$3$};
\node[] (C) at (5,0)  {$4$};
\node[] (C) at (3.5,0.3)  {\qihao{(2,1)}};
\draw[->,thick] (2.15,0) -- (2.85,0);
\draw[->,thick] (3.15,0) -- (3.85,0);
\draw[->,thick] (4.15,0) -- (4.85,0);
                        \end{tikzpicture}};

\node[] (C) at (-2,-10.15)  {\begin{tikzpicture}
\node[] (C) at (0,0)  {$G_3~:$};
\node[] (C) at (2,0)  {$1$};
\node[] (C) at (3,0)  {$2$};
\node[] (C) at (2.5,0.3)  {\qihao{(3,1)}};
\draw[->,thick] (2.15,0) -- (2.85,0);
                        \end{tikzpicture}};

\end{tikzpicture}}
\caption{Quivers of finite type}
\label{fig:quivers of finite types}
\end{center}
\end{figure}


\begin{defn} \cite{FST08,FST12}\label{defblock}
A \emph{block} is a quiver isomorphic to one of the quivers with black / white colored vertices shown on Figure \ref{fig:blocks-I-V}. Vertices marked in white are called \emph{outlets}. A connected quiver $Q$ is called \emph{block-decomposable} (\emph{decomposable} for brevity) if it can be obtained from a collection of blocks by identifying outlets of different blocks along some partial matching (matching of outlets of the same block is not allowed), where two arrows with same endpoints and opposite directions cancel out. If $Q$ is not block-decomposable then we call $Q$ \emph{non-decomposable}.

\begin{figure}
\begin{center}
{\begin{tikzpicture}
\node[] (C) at (-6,0)  {\begin{tikzpicture}
\node[] (C) at (-0.5,0)  {$\circ$};
\node[] (C) at (0.5,0)  {$\circ$};
\draw[->,thick] (-0.44,0) -- (0.44,0);
                        \end{tikzpicture}};

\node[] (C) at (-4,0)  {\begin{tikzpicture}
\node[] (C) at (-0.5,0)  {$\circ$};
\node[] (C) at (0.5,0)  {$\circ$};
\node[] (C) at (0,0.74)  {$\circ$};
\draw[<-,thick] (-0.44,0) -- (0.44,0);
\draw[->,thick] (-0.46,0.04) -- (-0.04,0.7);
\draw[<-,thick] (0.46,0.04) -- (0.04,0.7);
                        \end{tikzpicture}};

\node[] (C) at (-2,0)  {\begin{tikzpicture}
\node[] (C) at (-0.5,0)  {$\bullet$};
\node[] (C) at (0.5,0)  {$\bullet$};
\node[] (C) at (0,0.74)  {$\circ$};
\draw[->,thick] (-0.46,0.04) -- (-0.04,0.7);
\draw[->,thick] (0.46,0.04) -- (0.04,0.7);
                        \end{tikzpicture}};

\node[] (C) at (0,0)  {\begin{tikzpicture}
\node[] (C) at (-0.5,0)  {$\bullet$};
\node[] (C) at (0.5,0)  {$\bullet$};
\node[] (C) at (0,0.74)  {$\circ$};
\draw[<-,thick] (-0.46,0.04) -- (-0.04,0.7);
\draw[<-,thick] (0.46,0.04) -- (0.04,0.7);
                        \end{tikzpicture}};

\node[] (C) at (2,0)  {\begin{tikzpicture}
\node[] (C) at (-0.5,0)  {$\circ$};
\node[] (C) at (0.5,0)  {$\circ$};
\node[] (C) at (0,0.74)  {$\bullet$};
\node[] (C) at (0,-0.74)  {$\bullet$};
\draw[<-,thick] (-0.44,0) -- (0.44,0);
\draw[->,thick] (-0.46,0.04) -- (-0.04,0.7);
\draw[<-,thick] (0.46,0.04) -- (0.04,0.7);
\draw[->,thick] (-0.46,-0.04) -- (-0.04,-0.7);
\draw[<-,thick] (0.46,-0.04) -- (0.04,-0.7);
                        \end{tikzpicture}};

\node[] (C) at (4,0)  {\begin{tikzpicture}
\node[] (C) at (-0.5,0)  {$\bullet$};
\node[] (C) at (0.5,0)  {$\bullet$};
\node[] (C) at (-0.5,1)  {$\bullet$};
\node[] (C) at (0.5,1)  {$\bullet$};
\node[] (C) at (0,0.5)  {$\circ$};
\draw[->,thick] (-0.44,0) -- (0.44,0);
\draw[<-,thick] (-0.46,0.04) -- (-0.04,0.46);
\draw[->,thick] (0.46,0.04) -- (0.04,0.46);
\draw[<-,thick] (-0.44,1) -- (0.44,1);
\draw[->,thick] (-0.46,0.96) -- (-0.04,0.54);
\draw[<-,thick] (0.46,0.96) -- (0.04,0.54);
\draw[->,thick] (-0.5,0) -- (-0.5,0.94);
\draw[<-,thick] (0.5,0.06) -- (0.5,0.94);
                        \end{tikzpicture}};

\node[] (C) at (-6,-1.3)  {I};
\node[] (C) at (-4,-1.3)  {II};
\node[] (C) at (-2,-1.3)  {IIIa};
\node[] (C) at (0,-1.3)  {IIIb};
\node[] (C) at (2,-1.3)  {IV};
\node[] (C) at (4,-1.3)  {V};
\end{tikzpicture}}
\caption{Blocks. Outlets are colored white, dead ends are black.}
\label{fig:blocks-I-V}
\end{center}
\end{figure}
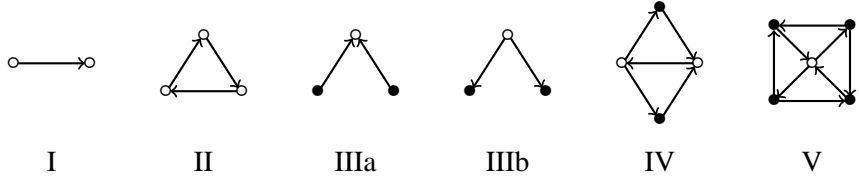
\end{defn}

Then it is proved in \cite[Theorem 13.3]{FST08} that a quiver is decomposable if and only if it is a quiver of a triangulation of an oriented marked Riemann surface, and thus a quiver mutation equivalent to a decomposable quiver is also decomposable. Note that all arrow multiplicities of a decomposable quiver are $1$ or $2$. Therefore decomposable quivers are mutation finite. It is clear that a quiver of rank two, that is, a quiver with two vertices, is mutation finite. Besides these two kinds of quivers, there are exactly $11$ exceptional skew-symmetric quivers of finite mutation type, see Theorem 6.1 in \cite{FST12}. We list the exceptional quivers in Figure \ref{fig:exceptional mutation finite types}.

\begin{figure}
\begin{center}
{\begin{tikzpicture}

\node[] (C) at (-4.4,1)  {$E_6~$};
\node[] (C) at (-3.4,1)  {$E_7~$};
\node[] (C) at (-2.4,1)  {$E_8~$};
\node[] (C) at (0,0)  {\begin{tikzpicture}
\node[] (C) at (0,0)  {$\tilde{E}_6~:$};
\node[] (C) at (2,0)  {$1$};
\node[] (C) at (3,0)  {$2$};
\node[] (C) at (4,0)  {$3$};
\node[] (C) at (5,0)  {$4$};
\node[] (C) at (6,0)  {$5$};
\node[] (C) at (4,1)  {$6$};
\node[] (C) at (4,2)  {$7$};
\draw[->,thick] (2.15,0) -- (2.85,0);
\draw[->,thick] (3.15,0) -- (3.85,0);
\draw[->,thick] (4.15,0) -- (4.85,0);
\draw[->,thick] (5.15,0) -- (5.85,0);
\draw[->,thick] (4,0.2) -- (4,0.8);
\draw[->,thick] (4,1.2) -- (4,1.8);
\node[] (C) at (9,0)  {~~};
                        \end{tikzpicture}};

\node[] (C) at (-0.5,-2)  {\begin{tikzpicture}
\node[] (C) at (0,0)  {$\tilde{E}_7~:$};
\node[] (C) at (2,0)  {$1$};
\node[] (C) at (3,0)  {$2$};
\node[] (C) at (4,0)  {$3$};
\node[] (C) at (5,0)  {$4$};
\node[] (C) at (6,0)  {$5$};
\node[] (C) at (7,0)  {$6$};
\node[] (C) at (8,0)  {$7$};
\node[] (C) at (5,1)  {$8$};
\draw[->,thick] (2.15,0) -- (2.85,0);
\draw[->,thick] (3.15,0) -- (3.85,0);
\draw[->,thick] (4.15,0) -- (4.85,0);
\draw[->,thick] (5.15,0) -- (5.85,0);
\draw[->,thick] (6.15,0) -- (6.85,0);
\draw[->,thick] (7.15,0) -- (7.85,0);
\draw[->,thick] (5,0.2) -- (5,0.8);
                        \end{tikzpicture}};

\node[] (C) at (0,-3.5)  {\begin{tikzpicture}
\node[] (C) at (0,0)  {$\tilde{E}_8~:$};
\node[] (C) at (2,0)  {$1$};
\node[] (C) at (3,0)  {$2$};
\node[] (C) at (4,0)  {$3$};
\node[] (C) at (5,0)  {$4$};
\node[] (C) at (6,0)  {$5$};
\node[] (C) at (7,0)  {$6$};
\node[] (C) at (8,0)  {$7$};
\node[] (C) at (9,0)  {$8$};
\node[] (C) at (4,1)  {$9$};
\draw[->,thick] (2.15,0) -- (2.85,0);
\draw[->,thick] (3.15,0) -- (3.85,0);
\draw[->,thick] (4.15,0) -- (4.85,0);
\draw[->,thick] (5.15,0) -- (5.85,0);
\draw[->,thick] (6.15,0) -- (6.85,0);
\draw[->,thick] (7.15,0) -- (7.85,0);
\draw[->,thick] (8.15,0) -- (8.85,0);
\draw[->,thick] (4,0.2) -- (4,0.8);
                        \end{tikzpicture}};

\node[] (C) at (-0.45,-5.5)  {\begin{tikzpicture}
\node[] (C) at (0,0)  {$E^{(1,1)}_6~:$};
\node[] (C) at (2,0)  {$1$};
\node[] (C) at (3,0)  {$2$};
\node[] (C) at (5,0)  {$3$};
\node[] (C) at (6,0)  {$4$};
\node[] (C) at (7,0)  {$5$};
\node[] (C) at (8,0)  {$6$};
\node[] (C) at (4,0.7)  {$7$};
\node[] (C) at (4,-0.7)  {$8$};
\draw[->,thick] (2.15,0) -- (2.85,0);
\draw[->,thick] (5.15,0) -- (5.85,0);
\draw[->,thick] (7.15,0) -- (7.85,0);
\draw[->,thick] (3.95,-0.5) -- (3.95,0.5);
\draw[->,thick] (4.05,-0.5) -- (4.05,0.5);
\draw[->,thick] (3.9,0.6) -- (3.1,0.1);
\draw[<-,thick] (3.9,-0.6) -- (3.1,-0.1);
\draw[->,thick] (4.1,0.6) -- (4.9,0.1);
\draw[<-,thick] (4.1,-0.6) -- (4.9,-0.1);
\draw[->,thick] (4.2,0.65) -- (6.85,0.1);
\draw[<-,thick] (4.2,-0.65) -- (6.85,-0.1);
                        \end{tikzpicture}};

\node[] (C) at (0,-7.8)  {\begin{tikzpicture}
\node[] (C) at (-1,0)  {$E^{(1,1)}_7~:$};
\node[] (C) at (1,0)  {$1$};
\node[] (C) at (2,0)  {$2$};
\node[] (C) at (3,0)  {$3$};
\node[] (C) at (5,0)  {$4$};
\node[] (C) at (6,0)  {$5$};
\node[] (C) at (7,0)  {$6$};
\node[] (C) at (8,0)  {$7$};
\node[] (C) at (4,0.7)  {$8$};
\node[] (C) at (4,-0.7)  {$9$};
\draw[->,thick] (1.15,0) -- (1.85,0);
\draw[->,thick] (2.15,0) -- (2.85,0);
\draw[->,thick] (6.15,0) -- (6.85,0);
\draw[->,thick] (7.15,0) -- (7.85,0);
\draw[->,thick] (3.95,-0.5) -- (3.95,0.5);
\draw[->,thick] (4.05,-0.5) -- (4.05,0.5);
\draw[->,thick] (3.9,0.6) -- (3.1,0.1);
\draw[<-,thick] (3.9,-0.6) -- (3.1,-0.1);
\draw[->,thick] (4.1,0.6) -- (4.9,0.1);
\draw[<-,thick] (4.1,-0.6) -- (4.9,-0.1);
\draw[->,thick] (4.2,0.65) -- (5.85,0.1);
\draw[<-,thick] (4.2,-0.65) -- (5.85,-0.1);
                        \end{tikzpicture}};

\node[] (C) at (0.4,-10)  {\begin{tikzpicture}
\node[] (C) at (0,0)  {$E^{(1,1)}_8~:$};
\node[] (C) at (2,0)  {$1$};
\node[] (C) at (3,0)  {$2$};
\node[] (C) at (5,0)  {$3$};
\node[] (C) at (6,0)  {$4$};
\node[] (C) at (7,0)  {$5$};
\node[] (C) at (8,0)  {$6$};
\node[] (C) at (9,0)  {$7$};
\node[] (C) at (10,0)  {$8$};
\node[] (C) at (4,0.7)  {$9$};
\node[] (C) at (4,-0.75)  {$10$};
\draw[->,thick] (2.15,0) -- (2.85,0);
\draw[->,thick] (6.15,0) -- (6.85,0);
\draw[->,thick] (7.15,0) -- (7.85,0);
\draw[->,thick] (8.15,0) -- (8.85,0);
\draw[->,thick] (9.15,0) -- (9.85,0);
\draw[->,thick] (3.95,-0.5) -- (3.95,0.5);
\draw[->,thick] (4.05,-0.5) -- (4.05,0.5);
\draw[->,thick] (3.9,0.6) -- (3.1,0.1);
\draw[<-,thick] (3.9,-0.6) -- (3.1,-0.1);
\draw[->,thick] (4.1,0.6) -- (4.9,0.1);
\draw[<-,thick] (4.1,-0.6) -- (4.9,-0.1);
\draw[->,thick] (4.2,0.65) -- (5.85,0.1);
\draw[<-,thick] (4.2,-0.65) -- (5.85,-0.1);
                        \end{tikzpicture}};

\node[] (C) at (-2.5,-12.5)  {\begin{tikzpicture}
\node[] (C) at (-3,0)  {$X_6~:$};
\node[] (C) at (0,0)  {$3$};
\node[] (C) at (-1,0)  {$1$};
\node[] (C) at (-0.5,0.75)  {$2$};
\draw[<-,thick] (-0.9,0) -- (-0.1,0);
\draw[->,thick] (-0.95,0.15) -- (-0.65,0.65);
\draw[->,thick] (-0.9,0.1) -- (-0.6,0.6);
\draw[<-,thick] (-0.1,0.1) -- (-0.4,0.64);

\node[] (C) at (1,0)  {$5$};
\node[] (C) at (0.5,0.75)  {$4$};
\draw[->,thick] (0.9,0) -- (0.1,0);
\draw[<-,thick] (0.95,0.15) -- (0.65,0.65);
\draw[<-,thick] (0.9,0.1) -- (0.6,0.6);
\draw[->,thick] (0.1,0.1) -- (0.4,0.64);

\node[] (C) at (0,-1)  {$6$};
\draw[->,thick] (0,-0.85) -- (0,-0.15);
                        \end{tikzpicture}};

\node[] (C) at (-2.5,-15)  {\begin{tikzpicture}
\node[] (C) at (-3,0)  {$X_7~:$};
\node[] (C) at (0,0)  {$3$};
\node[] (C) at (-1,0)  {$1$};
\node[] (C) at (-0.5,0.75)  {$2$};
\draw[<-,thick] (-0.9,0) -- (-0.1,0);
\draw[->,thick] (-0.95,0.15) -- (-0.65,0.65);
\draw[->,thick] (-0.9,0.1) -- (-0.6,0.6);
\draw[<-,thick] (-0.1,0.1) -- (-0.4,0.64);

\node[] (C) at (1,0)  {$5$};
\node[] (C) at (0.5,0.75)  {$4$};
\draw[->,thick] (0.9,0) -- (0.1,0);
\draw[<-,thick] (0.95,0.15) -- (0.65,0.65);
\draw[<-,thick] (0.9,0.1) -- (0.6,0.6);
\draw[->,thick] (0.1,0.1) -- (0.4,0.64);

\node[] (C) at (-0.5,-0.75)  {$6$};
\node[] (C) at (0.5,-0.75)  {$7$};
\draw[<-,thick] (-0.1,-0.1) -- (-0.4,-0.64);
\draw[->,thick] (0.1,-0.1) -- (0.4,-0.64);
\draw[<-,thick] (-0.4,-0.71) -- (0.4,-0.71);
\draw[<-,thick] (-0.4,-0.79) -- (0.4,-0.79);
                        \end{tikzpicture}};

\end{tikzpicture}}
\caption{Representatives of non-decomposable quivers of finite mutation type}
\label{fig:exceptional mutation finite types}
\end{center}
\end{figure}
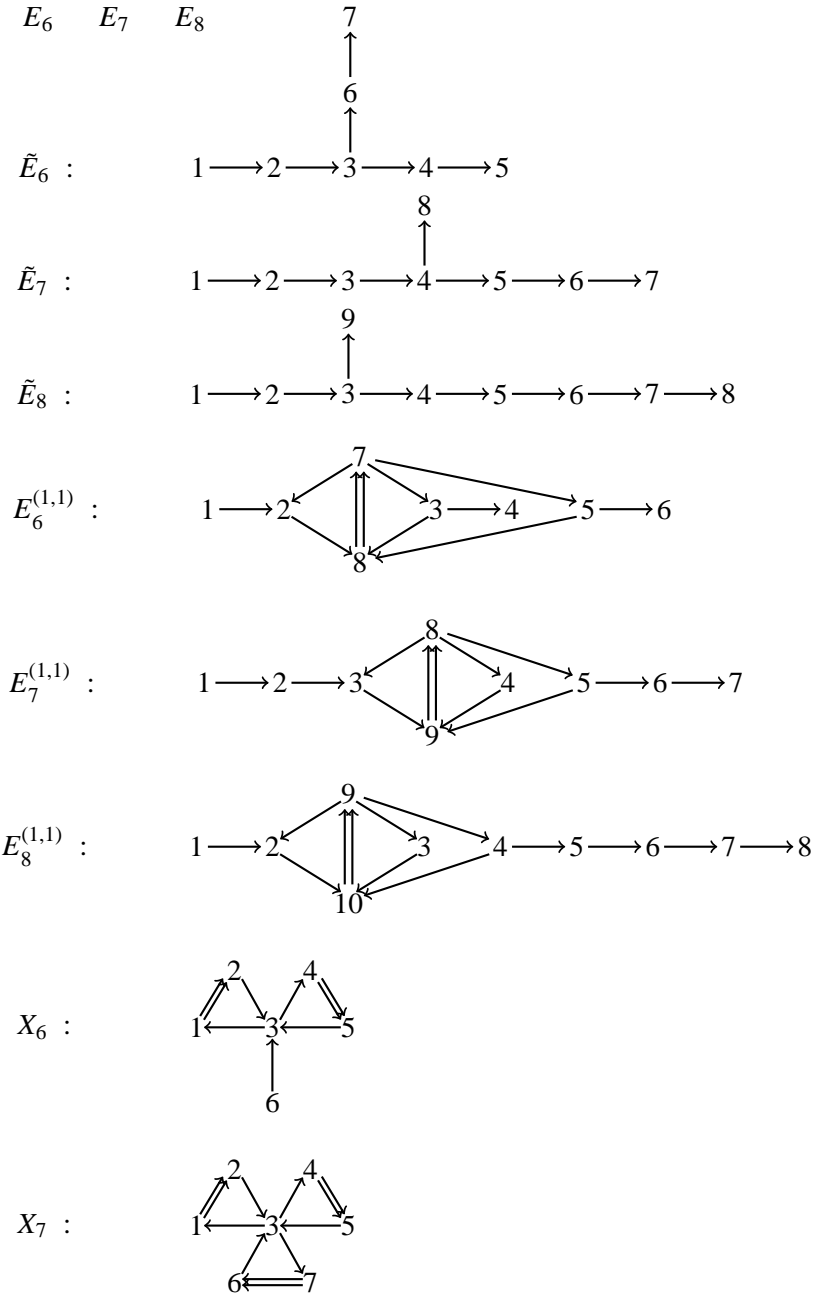
\subsection{Automorphism groups}
In this section, we recall the cluster automorphism group \cite{ASS12} of a cluster algebra, and the automorphism group of the corresponding exchange graph \cite{CZ15}.
\begin{defn} \cite{ASS12}\emph(Cluster automorphisms)\label{cluster automorphisms}
For a cluster algebra $\A$ and a $\Z$-algebra automorphism $f:\A\to\A$, we call $f$ a \emph{cluster automorphism}, if there exists a labeled seed $(\x,B)$ of $\A$ such that the following conditions are satisfied:
\begin{enumerate}
\item $f(\x)$ is a cluster;
\item $f$ is compatible with mutations, that is, for every $x\in \x$ and $y\in \x$, we have
$$f(\mu_{x,\x}(y))=\mu_{f(x),f(\x)}(f(y)).$$
\end{enumerate}
\end{defn}
Then a cluster automorphism maps a labeled seed $\S=(\x, B)$ to a labeled seed $\S'=(\x', B')$.
Under our assumption that $B$ is indecomposable, we have the following
\begin{lem} \cite{ASS12}\label{lem:equivalent discription of clus-auto}
A $\Z$-algebra automorphism $f:\A\to\A$ is a cluster automorphism if and only if there exists a labeled seed $\S=(\x, B)$ of $\A$, such that $f(\x)$ is the cluster in a labeled seed $\S'=(\x', B')$ of $\A$ with $B'= B$ or $B'=-B$.
\end{lem}
We call the cluster automorphism such that $B= B'$ ($B= -B'$ respectively) a \emph{direct cluster automorphism} (an \emph{inverse cluster automorphism} respectively). Clearly, all the cluster automorphism of a cluster algebra $\A$ form a group with homomorphism composition as multiplication.
We call this group the \emph{cluster automorphism group} of $\A$, and denote it by $Aut(\A)$. We call the group $Aut^{+}(\A)$ consisting of the direct cluster automorphisms of $\A$ the \emph{direct cluster automorphism group} of $\A$, which is a subgroup of $Aut(\A)$ with index at most two, see \cite{ASS12}.\\

\begin{defn}(Automorphism of exchange graphs) \cite{S14,CZ15}
An automorphism of the exchange graph $E_\A$ of a cluster algebra $\A$ is an automorphism of $E_\A$ as a graph, that is, a permutation $\sigma$ of the vertex set, such that the pair of
vertices $(u,v)$ forms an edge if and only if the pair $(\sigma(u),\sigma(v))$ also forms an edge.
\end{defn}

Clearly, the natural composition of two automorphisms of $E_\A$ is again an automorphism. We define an \emph{automorphism group $Aut(E_\A)$} of $E_\A$ as a group consisting of automorphisms of $E_\A$.
It is clearly that a cluster automorphism induces a unique automorphism of the exchange graph. Thus $Aut(\A)$ is a subgroup of $Aut(E_\A)$, see \cite{CZ15}. By the definition, an automorphism $\sigma$ of an exchange graph maps clusters to clusters, and induces an automorphism of its dual graph: cluster complex $\Delta$, we denote this automorphism by $\sigma_\Delta$. Then $\sigma_\Delta$ is a permutation of cluster variables in $\X$, which maps a maximal simplex to a maximal simplex, but the map may not be compatible with the algebra relations among cluster variables in $\A$, thus it is not necessarily a cluster automorphism. In fact, $Aut(\A)$ maybe a proper subgroup of $Aut(E_\A)$, see Example \ref{exm:rank 2 case},\ref{exm:infinite rank 2 case2}. The following Lemma can be viewed as a description of $Aut(\A)$ as a subgroup of $Aut(E_\A)$, as those exchange graph automorphisms which happen to preserve B-matrices (perhaps up to global reversal of sign) up to simultaneously relabeling of the rows and columns. In this point of view, the main thrust of this paper is to show that, typically for the cluster algebras we consider, any graph automorphism has the property of preserving B-matrices.
\begin{lem}\label{lem:key lemma}
Let $\Phi: E_\A\to E_\A$ be an automorphism which maps a seed $\S=(\x,B)$ to a seed $\S'=(\x',B')$. If $B\cong B'$ or $B\cong-B'$ under the correspondence $\x\to \x'$, then the map $\x\to \x'$ induces a cluster automorphism $\Psi$ of $\A$ and the induced automorphism $\Psi_E: E_\A\to E_\A$ coincides with $\Phi$.
\end{lem}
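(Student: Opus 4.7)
The plan is to extend the bijection $\sigma_\Delta|_\x : \x \to \x'$ uniquely to a $\Z$-algebra automorphism $\delta$ of $\F$, verify compatibility with the exchange relation at the seed $\S$, propagate this to every seed by induction, and finally match $\delta_E$ with $\sigma$. Setting $y_i := \sigma_\Delta(x_i)$, the assignment $x_i \mapsto y_i$ extends uniquely to a $\Z$-algebra isomorphism $\delta : \F \to \F$ because $\x$ and $\x' = \{y_1,\dots,y_n\}$ are each algebraically independent generating sets of $\F$.

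To check compatibility at $\S$, I apply $\delta$ to the exchange relation
\[
x_k x'_k = \prod_{b_{jk}>0} x_j^{b_{jk}} + \prod_{b_{jk}<0} x_j^{-b_{jk}}
\]
to obtain a formula expressing $\delta(x'_k)$ as a Laurent polynomial in $\x'$ with exponents read off from $B$. On the other hand, since $\sigma$ is a graph automorphism with $\sigma(\S) = \S'$, the neighbor $\mu_k(\S)$ of $\S$ is sent to some neighbor of $\S'$; because $\sigma_\Delta$ is a bijection on clusters, this neighbor must be $\mu_{y_k}(\S')$, and its new cluster variable $\sigma_\Delta(x'_k)$ satisfies the exchange relation determined by $B'$. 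Under the hypothesis $B \cong \pm B'$ via $\sigma_\Delta$, the two exchange relations have the same right hand side---the sign change merely swaps the two monomials in the sum---so $\delta(x'_k) = \sigma_\Delta(x'_k)$. Because $\mu_k(-B) = -\mu_k(B)$, the hypothesis propagates to $\mu_k(B) \cong \pm \mu_k(B')$ at the adjacent seed under the induced labeling, and the same argument applies inductively at every vertex of $E_\A$. Consequently $\delta$ carries every cluster variable of $\A$ to a cluster variable of $\A$, so $\delta$ restricts to a $\Z$-algebra automorphism of $\A$, which is a cluster automorphism by Lemma~\ref{lem:equivalent discription of clus-auto}.

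For the equality $\delta_E = \sigma$, both graph automorphisms send $\S$ to $\S'$, and both commute with mutation---$\sigma$ by the definition of a graph automorphism, and $\delta_E$ because the underlying cluster automorphism $\delta$ is compatible with mutations. Hence they agree on every neighbor of $\S$ and, by connectedness of $E_\A$ together with induction on the distance from $\S$, on every vertex of $E_\A$. The main technical point is the verification that the sign ambiguity $B \cong \pm B'$ is harmless when checking the single-seed compatibility; this rests on the observation that negating an exchange matrix only swaps the two monomials in the exchange relation, producing the same element of $\F$. Everything else is formal propagation along the exchange graph.
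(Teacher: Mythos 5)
Your proposal is correct and follows essentially the same route as the paper: the paper likewise obtains the cluster automorphism $\delta$ from the hypothesis $B\cong\pm B'$ under $\sigma_\Delta$ and then establishes $\delta_E=\sigma$ by checking agreement on the cluster $\x$ and inducting on mutations. The only difference is one of detail: the paper asserts in a single sentence that $\sigma_\Delta\colon\x\to\x'$ induces a cluster automorphism (implicitly via Lemma~\ref{lem:equivalent discription of clus-auto}), whereas you spell out the extension to $\F$, the exchange-relation verification at $\S$ (including the harmless monomial swap when $B\cong -B'$), and the propagation of the sign condition via $\mu_k(-B)=-\mu_k(B)$.
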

\begin{proof}
Since $B\cong B'$ or $B\cong-B'$, the map $\x\to \x'$ induces a cluster automorphism $\Psi$ of $\A$ by Lemma \ref{lem:equivalent discription of clus-auto}. Noticing that $\Phi(\x)=\Psi(\x)$, then by inductions on the mutations, we have $\Phi=\Psi$ on each clusters of $E_\A$, so $\Phi=\Psi_E$ as automorphisms of the exchange graph $E_\A$.
\end{proof}

\section{Automorphism groups of exchange graphs}
\label{Sec exchange graph}
In this section we consider relations between the groups $Aut(\A)$ and $Aut(E_\A)$ for a cluster algebra $\A$ of finite type or of skew-symmetric finite mutation type. For this, it is needed to describe $E_\A$ more precisely. In the following we will recall the basic structures of $E_\A$ from \cite{FZ02,FZ03}, and then introduce layers of geodesic loops on $E_\A$.

\subsection{Layers of geodesic loops}
\label{Layers of geodesic loops}


Let $\S=(\x,B)$ be a labeled seed on the cluster pattern of $\A$.
Let $\x'$ be a proper subset of $\x$, then $\x'$ is a non-maximal simplex in the cluster complex $\Delta$. We denote by $\Delta_{\x'}$ the {\it link} of $\x \setminus \x'$, which is the simplicial complex on the ground set $\X_{\x'}=\{\alpha\in \X-(\x \setminus \x') : (\x \setminus \x')\cup \{\alpha\}\in \Delta\}$, such that $\x''$ is a simplex in $\Delta_{\x'}$ if and only if $\x \setminus \x'\cup \x''$ is a simplex in $\Delta$. Let $\Gamma_{\x'}$ be the dual graph of $\Delta_{\x'}$. We view $\Gamma_{\x'}$ as a subgraph of $E_\A$ whose vertices are the maximal simplices in $\Delta$ that contain ${\x \setminus \x'}$. In fact, as we explain now, $\Gamma_{\x'}$ is the exchange graph of a cluster algebra $\A_f$ defined by a frozen seed $\S_f=(\x',\x\setminus\x',B_f)$, which is the freezing of $\S$ at $\x \setminus \x'$ (see \cite[Definition 2.25]{CZ14}), where $B_f$ is obtained from $B$ by deleting the columns labeled by variables in $\x \setminus \x'$. Then elements in $\x \setminus \x'$ are coefficients of $\A_f$ (we refer to \cite{FZ02,FZ07} for a cluster algebra with coefficients). Let $\A'$ be cluster algebra defined by a seed $\S'=(\x',B')$, where $B'$ is obtained from $B$ by deleting rows and columns labeled by variables in $\x \setminus \x'$. In our settings, that is, cluster algebras are of finite type or of skew-symmetric finite type, the exchange graph of a cluster algebra (with coefficients) only depends on the principal part of the exchange matrix (see \cite{FZ03,CKLP13}) which is the submatrix labeled by $\x\setminus \x'\times \x\setminus \x'$, thus the graph $\Gamma_{\x'}$ coincides with the exchange graph $E_{\A'}$.\\

For an $2$-dimensional subcomplex ${\x'}$ of $\Delta$, we call the dual graph $\Gamma_{\x'}$ a {\it geodesic loop} of $E_\A$. We mention that the definition of geodesic loop is slightly different with the definition used in \cite{FZ03}, where a line is not a geodesic loop. If $\A$ is of finite type, then $E_\A$ is a finite graph, and $\Gamma_{\x'}$ is a polygon. Notice that in the seed $\S'=(\x',B')$ constructed above, $B'$ is of Dynkin type, that is, one of types $A_2,B_2,C_2$ or $G_2$. Therefore $\Gamma_{\x'}$ is a $h+2$-polygon, where $h$ is the Coxeter number of the corresponding Dynkin type, see \cite{FZ03}. If $\A$ is of finite mutation type, then $\Gamma_{\x'}$ may be a line. We fix a basepoint $\S=(\x,B)$ and introduce the following concept.\\


\begin{defn}
\begin{enumerate}
\item Let $\S'$ be a point of $E_\A$, the \emph{distance} $\ell(\S,\S')$ between $\S$ and $\S'$ is the minimal length of paths between $\S$ and $\S'$;
\item Let $L$ be a geodesic loop of $E_\A$, the \emph{distance} $\ell_\S(L)$ between $\S$ and $L$ is the minimal length $min\{\ell(\S,\S'),\S'\in L\}$;
\item Let $m\in\Z_{\geqslant 0}$ be a non-negative integer, denote by $\ell^m_\S$ the set of geodesic loop whose distance to $\S$ is $m$. We call it the \emph{$m$-layer} of geodesic loops of $E_\A$ based on $\S$;
\item For any $m\in\Z_{\geqslant 0}$, denote $N(\ell^m_\S)$ the set of amounts of edges belonging to geodesic loops in the \emph{$m$-layer} $\ell^m_\S$.
\end{enumerate}
\end{defn}
\begin{rem}\label{rem: layers of geodesic loops}
The following observations are directly derived from the definitions:
\begin{enumerate}
\item The elements in $\ell^0_\S$ are those geodesic loops $\Gamma_{\x'}$ for the $2$-dimensional subcomplex $\x'$ of $\Delta$, where $\x'$ is a subset of the cluster $\x$ in $\S$;
\item For $m_1\neq m_2$, $\ell^{m_1}_\S\cap\ell^{m_2}_\S=\emptyset$;
\item The disjoint union $\sqcup_{m\geqslant 0}\ell^m_\S$ is the set of all the geodesic loops of $E_\A$;
\item If $\sigma:E_\A\to E_{\A'}$ is an isomorphism of graphs, such that the image of $\S$ is $\S'$, then for every $m\in \Z_{\geqslant 0}$, $N(\ell^m_\S)=N(\ell^m_{\S'})$ as sets.
\end{enumerate}
\end{rem}

\subsection{Cases of rank two and rank three}
In this subsection, we consider the relations between $Aut(\A)$ and $Aut(E_\A)$ for a cluster algebra $\A$ of rank two or rank three.

\begin{exm}\label{exm:rank 2 case}
For a finite type cluster algebra $\A$ of rank $2$, that is, one of types $A_2, B_2, C_2$ or $G_2$, its exchange graph $E_\A$ is a $(h+2)$-polygon, thus $Aut(E_\A)$ is isomorphic to the dihedral group $\mathbb{D}_{h+2}$, where $h$ is the Coxeter number. If $\A$ is of type $A_2$, then $Aut(\A)\cong \mathbb{D}_{5}$ \cite{ASS12}, thus $Aut(\A)\cong Aut(E_\A)$. If $\A$ is of type $B_2, C_2$ or $G_2$, \cite[Theorem 3.5]{CZb15} shows that $Aut(\A)\cong \mathbb{D}_{(h+2)/2}$, thus $Aut(\A)\subsetneqq Aut(E_\A)$.
\end{exm}

\begin{exm}\label{exm:infinite rank 2 case}
For an infinite type skew-symmetric cluster algebra $\A$ of rank $2$, its exchange graph $E_\A$ is a line, thus $Aut(E_\A)=<s>\times<r>\cong \Z\rtimes\Z_2=\mathbb{D}_\infty$, where $s$ is a left shift of $E_\A$ which maps a cluster to the left adjacent cluster and $r$ is a reflection with respect to a fixed cluster. Then $s$ corresponds to a direct cluster automorphism of $\A$ and $r$ corresponds to an inverse cluster automorphism of $\A$, thus by Lemma \ref{lem:key lemma} $Aut(E_\A) \subseteq Aut(\A)$. Therefore $Aut(E_\A) \cong Aut(\A) \cong \mathbb{D}_\infty$.
\end{exm}

\begin{exm}\label{exm:infinite rank 2 case2}
For an infinite type non-skew-symmetric cluster algebra $\A$ of rank $2$, its exchange graph $E_\A$ is also a line, thus as showed in Example \ref{exm:infinite rank 2 case}, $Aut(E_\A)=<s>\times<r>\cong \Z\rtimes\Z_2$, where $s$ corresponds to a direct cluster automorphism of $\A$, while $r$ dose not correspond to any cluster automorphism of $\A$, since there is no non-trivial symmetry of the quiver in any seed of $\A$. Thus $Aut(\A) \cong \Z \subsetneqq Aut(E_\A)$.
\end{exm}

\begin{exm}\label{exm:A3 type exchange graph}
We consider the cluster algebra $\A$ of type $A_3$ with an initial labeled seed $\S_0=(\{x_1,x_2,x_3\}, Q)$, where $Q$ is
$\xymatrix{1\ar[r]&2&3\ar[l]}.$ Then its exchange graph $E_{\A}$ is depicted in Figure \ref{automorphisms of exchange graph of type $A_3$}.
Note that there are three quadrilaterals and six pentagons in $E_{\A}$. Then as shown in \cite[Example 3]{CZb15}, $Aut(\A)=<f_-,f_+>\cong \mathbb{D}_6$, where $f_-$ is defined by:
\begin{equation}
f_-: \begin{cases}
x_1 \mapsto x_1 \\ 
x_2 \mapsto \mu_2(x_2) \\ 
x_3 \mapsto x_3
\end{cases}
\end{equation}
It maps $\S_0$ to $\S_1$, and induces a reflection with respect to the horizontal central axis of $E_\A$. The cluster automorphism $f_+$ is defined by:
\begin{equation}
f_+: \begin{cases}
x_1 \mapsto \mu_1(x_1) \\ 
x_2 \mapsto x_2 \\ 
x_3 \mapsto \mu_3(x_3)
\end{cases}
\end{equation}
It gives a reflection on $E_\A$, which maps $\S_0$ to $\S_5$.
In fact as shown in \cite{CZb15}, a direct cluster automorphism of $\A$ is of the form $(f_+f_-)^m, 0 \leq m \leq 5$, which induces a rotation of seeds in $\{\S_0, \S_1, \S_2, \S_3, \S_4, \S_5\}$, thus $Aut^+(\A)$ can be viewed as the symmetry group of the \emph{bipartite belt} consisting of seeds in $\{\S_0, \S_1, \S_2, \S_3, \S_4, \S_5\}$, where the quivers in these seeds are the bipartite quivers isomorphic to $Q$.\\

We will prove that any automorphisms of $E_\A$ is induced from an element in $Aut(\A)$, and thus $Aut(\A)\cong Aut(E_\A)$. For this purpose, we should show the following claims:

(1) there exists no automorphism of $E_\A$ which maps $\S_0$ to a vertex excepting for $\S_i, 0 \leq i \leq 5$;

(2) if an automorphism of $E_\A$ maps $\S_0$ to $\S_i$, $0\leq i \leq 5$, then it is induced from a cluster automorphism of $\A$.

Let $\sigma$ be an automorphism of $E_\A$, due to symmetries of $E_\A$, we only show that $\sigma(\S)\neq O_i$, $i=1,2,3$. By a direct computation, the sets of numbers for the layers of geodesic loops based on these vertices are as follows:
\begin{align*}
N(\ell^0_\S)=\{4,5,5\},~N(\ell^1_\S)=\{4,5,5\},~N(\ell^2_\S)=\{5,5\},~N(\ell^3_\S)=\{4\};~~~~~\\
N(\ell^0_{O_1})=\{4,5,5\},~N(\ell^1_{O_1})=\{5,5,5\},~N(\ell^2_{O_1})=\{4,4\},~N(\ell^3_{O_1})=\{5\};\\
N(\ell^0_{O_2})=\{5,5,5\},~N(\ell^1_{O_1})=\{4,4,4\},~N(\ell^2_{O_1})=\{5,5,5\};~~~~~~~~~~~~~~~~~~\\
N(\ell^0_{O_3})=\{4,5,5\},~N(\ell^1_{O_3})=\{5,5,5\},~N(\ell^2_{O_3})=\{4,4\},~N(\ell^3_{O_3})=\{5\};\\
\end{align*}
Then by Remark \ref{rem: layers of geodesic loops} (4), $\sigma(\S_0)\neq O_i$, $i=1,2,3$. So the claim one is affirmed.\\

Now we consider the claim two. Still due to the symmetries of the graph, we may assume that $\sigma(\S_0) = \S_0$. Since $\sigma$ is a graph automorphism, it can be seen that there are two possibilities of $\sigma$, one is the identity, another is the reflection $f_0$ with respect to the vertical central axis of $E_\A$, as depicted in Figure \ref{automorphisms of exchange graph of type $A_3$}. Note that the identity graph automorphism is induced from the identity automorphism of the cluster algebra, while the graph automorphism $f_0$ is induced from the cluster automorphism $(f_+f_-)^3$ by a direct computation.
Therefore the claim two is true and we have $Aut(E_{\A})\cong Aut(\A)\cong \mathbb{D}_6$.

\begin{figure}
\begin{center}
{\begin{tikzpicture}[scale=0.8]

\node[] (C) at (0,4)
						{$\S_4$};
\node[] (C) at (0,2.8)
						{$\S_5$};
\node[] (C) at (0,0.6)
						{$\S_0$};
\node[] (C) at (3,0)
						{$O_3$};
\node[] (C) at (1.8,0)
						{$O_2$};
\node[] (C) at (0.65,1.7)
						{$O_1$};
\draw[thick] (0,3.7) -- (0,3.1);
\draw[thick] (2.1,0) -- (2.7,0);
\draw[thick] (0.3,3.7) -- (2.85,0.2);
\draw[thick] (0.1,2.55) -- (0.45,2.0);
\draw[thick] (0.2,0.9) -- (0.45,1.4);
\draw[thick] (1.65,0.2) -- (0.8,1.4);

\node[] (C) at (-3,0)
						{$\bullet$};
\node[] (C) at (-1.8,0)
						{$\bullet$};
\node[] (C) at (-0.65,1.7)
						{$\bullet$};
\draw[thick] (-2.1,0) -- (-2.7,0);
\draw[thick] (-0.3,3.7) -- (-2.85,0.2);
\draw[thick] (-0.1,2.55) -- (-0.45,2.0);
\draw[thick] (-0.2,0.9) -- (-0.45,1.4);
\draw[thick] (-1.65,0.2) -- (-0.8,1.4);


\node[] (C) at (0,-4)
						{$\S_3$};
\node[] (C) at (0,-2.8)
						{$\S_2$};
\node[] (C) at (0,-0.6)
						{$\S_1$};

\node[] (C) at (0.65,-1.7)
						{$\bullet$};
\draw[thick] (0,-3.7) -- (0,-3.1);
\draw[thick] (2.1,0) -- (2.7,0);
\draw[thick] (0.3,-3.7) -- (2.85,-0.2);
\draw[thick] (0.1,-2.55) -- (0.45,-2.0);
\draw[thick] (0.2,-0.9) -- (0.45,-1.4);
\draw[thick] (1.65,-0.2) -- (0.8,-1.4);

\node[] (C) at (-3,0)
						{$\bullet$};
\node[] (C) at (-1.8,0)
						{$\bullet$};
\node[] (C) at (-0.65,-1.7)
						{$\bullet$};
\node[] (C) at (0,-5.3)
						{$f_0$};
\node[] (C) at (4,0)
						{$f_-$};
\node[] (C) at (-4,1.6)
						{$f_+$};
\draw[thick] (-2.1,0) -- (-2.7,0);
\draw[thick] (-0.3,-3.7) -- (-2.85,-0.2);
\draw[thick] (-0.1,-2.55) -- (-0.45,-2.0);
\draw[thick] (-0.2,-0.9) -- (-0.45,-1.4);
\draw[thick] (-1.65,-0.2) -- (-0.8,-1.4);

\draw[thick] (0,-0.3) -- (0,0.3);
\draw[thick,dashed,<->] (-0.5,-4.7) -- (0.5,-4.7);
\draw[thick,dashed,<->] (3.5,0.5) -- (3.5,-0.5);
\draw[thick,dashed,<->] (-3.5,1) -- (-3.5,2);
\end{tikzpicture}}
\end{center}
\begin{center}
\caption{The exchange graph of a cluster algebra of type $A_3$}
\label{automorphisms of exchange graph of type $A_3$}
\end{center}
\end{figure}
\end{exm}
\begin{exm}\label{exm:B3 C3 type exchange graph}
It is known from a result in \cite{FZ032} that the cluster algebras of type $B_n$ and type $C_n$ have the same exchange graph. Based on a seed $\S_0$,
the exchange graph of a cluster algebra $\A$ of type $B_3$ or type $C_3$
is depicted in Figure \ref{automorphisms of exchange graph of type $BC_3$}.
For the cluster algebra of type $B_3$, the quiver of the initial seed $\S_0$ is \begin{center}
{\begin{tikzpicture}
\node[] (C) at (-1.5,0)  {$1$};
\node[] (C) at (0,0)  {$2$};	
\node[] (C) at (1.5,0)  {$3.$};
\node[] (C) at (0.7,0.3)  {\qihao{(2,1)}};
\draw[<-,thick] (-0.2,0) -- (-1.3,0);
\draw[<-,thick] (0.2,0) -- (1.3,0);
\end{tikzpicture}}
\end{center}
For the cluster algebra of type $C_3$, the quiver of the initial seed $\S_0$ is
\begin{center}
{\begin{tikzpicture}
\node[] (C) at (-1.5,0)  {$1$};
\node[] (C) at (0,0)  {$2$};						
\node[] (C) at (1.5,0)  {$3.$};
\node[] (C) at (0.7,0.3)  {\qihao{(1,2)}};
\draw[<-,thick] (-0.2,0) -- (-1.3,0);
\draw[<-,thick] (0.2,0) -- (1.3,0);
\end{tikzpicture}}
\end{center}
Let $\sigma$ be an automorphism of $E_\A$. As showed by Example 4 in \cite{CZb15}, we have $Aut(\A)\cong \mathbb{D}_4$ and $\{\S_0, \S_1, \S_2, \S_3, \S_4, \S_5, \S_6, \S_7\}$ are all the seeds whose quivers are isomorphic to $Q$. Similarly, to get $Aut(E_\A)\cong Aut(\A)$, we should prove the two claims stated in Example \ref{exm:A3 type exchange graph}. For the first claim, we only need to prove that $\sigma(\S_0)\neq O_{i}(i=1,2,3,4)$ in Figure \ref{automorphisms of exchange graph of type $BC_3$}, and this can be obtained by the fact that these seeds have different combinatorial numbers of layers of geodesic loops:
\begin{align*}
N(\ell^0_{\S_0})=\{4,5,6\},~N(\ell^1_{\S_0})=\{4,5,6\};~\\
N(\ell^0_{O_1})=\{5,6,6\};~~~~~~~~~~~~~~~~~~~~~~~~~~~~~~\\
N(\ell^0_{O_2})=\{4,5,6\},~N(\ell^1_{O_2})=\{5,6,6\};\\
N(\ell^0_{O_3})=\{4,5,6\},~N(\ell^1_{O_3})=\{5,6,6\};\\
N(\ell^0_{O_4})=\{5,6,6\}.~~~~~~~~~~~~~~~~~~~~~~~~~~~~~~~\\
\end{align*}
For the second claim, we may also assume $\sigma(\S_0)= \S_0$. Since $N(\ell^0_{\S_0})=\{4,5,6\}$, there are neither rotation symmetries nor reflection symmetries of $E_\A$ at $\S_0$. So $\sigma$ must be the identity automorphism of $E_\A$, which is induced from the identity automorphism of the cluster algebra. Noticing that there are eight elements in $Aut(\A)\cong \mathbb{D}_4$, where each one corresponds a graph automorphism which maps $\S_0$ to $\S_i, 0\leq i\leq7$.
\begin{figure}
\begin{center}
{\begin{tikzpicture}[scale=0.8]

\node[] (C) at (0,6)
						{$O_1$};

\node[] (C) at (1,0)
						{$O_4$};
\node[] (C) at (2,1.5)
						{$O_3$};
\node[] (C) at (1,3)
						{$\S$};
\node[] (C) at (3,3)
						{$\S_7$};
\node[] (C) at (2,4.5)
						{$O_2$};

\draw[thick] (-0.8,0) -- (0.8,0);
\draw[thick] (-0.8,3) -- (0.8,3);

\draw[thick] (1.2,0.2) -- (1.8,1.3);
\draw[thick] (1.2,2.8) -- (1.8,1.7);
\draw[thick] (1.2,3.2) -- (1.8,4.3);
\draw[thick] (2.8,3.2) -- (2.2,4.3);
\draw[thick] (1.8,4.7) -- (0.2,5.8);
\draw[thick] (2.2,1.7) -- (2.8,2.8);

\node[] (C) at (-1,0)
						{$\bullet$};
\node[] (C) at (-2,1.5)
						{$\bullet$};
\node[] (C) at (-1,3)
						{$\S_1$};
\node[] (C) at (-3,3)
						{$\S_2$};
\node[] (C) at (-2,4.5)
						{$\bullet$};

\draw[thick] (-1.2,0.2) -- (-1.8,1.3);
\draw[thick] (-1.2,2.8) -- (-1.8,1.7);
\draw[thick] (-1.2,3.2) -- (-1.8,4.3);
\draw[thick] (-2.8,3.2) -- (-2.2,4.3);
\draw[thick] (-1.8,4.7) -- (-0.2,5.8);
\draw[thick] (-2.2,1.7) -- (-2.8,2.8);
\node[] (C) at (0,-6)
						{$\bullet$};
\node[] (C) at (2,-1.5)
						{$\bullet$};
\node[] (C) at (1,-3)
						{$\S_5$};
\node[] (C) at (3,-3)
						{$\S_6$};
\node[] (C) at (2,-4.5)
						{$\bullet$};

\draw[thick] (-0.8,-3) -- (0.8,-3);

\draw[thick] (1.2,-0.2) -- (1.8,-1.3);
\draw[thick] (1.2,-2.8) -- (1.8,-1.7);
\draw[thick] (1.2,-3.2) -- (1.8,-4.3);
\draw[thick] (2.8,-3.2) -- (2.2,-4.3);
\draw[thick] (1.8,-4.7) -- (0.2,-5.8);
\draw[thick] (2.2,-1.7) -- (2.8,-2.8);

\node[] (C) at (-2,-1.5)
						{$\bullet$};
\node[] (C) at (-1,-3)
						{$\S_4$};
\node[] (C) at (-3,-3)
						{$\S_3$};
\node[] (C) at (-2,-4.5)
						{$\bullet$};

\draw[thick] (-1.2,-0.2) -- (-1.8,-1.3);
\draw[thick] (-1.2,-2.8) -- (-1.8,-1.7);
\draw[thick] (-1.2,-3.2) -- (-1.8,-4.3);
\draw[thick] (-2.8,-3.2) -- (-2.2,-4.3);
\draw[thick] (-1.8,-4.7) -- (-0.2,-5.8);
\draw[thick] (-2.2,-1.7) -- (-2.8,-2.8);
\draw[thick] (3,2.8) -- (3,-2.8);
\draw[thick] (-3,2.8) -- (-3,-2.8);
\draw[thick] (0.4,5.9) arc (70:-70:6.3);

\end{tikzpicture}}
\end{center}
\begin{center}
\caption{The exchange graph of a cluster algebra of type $B_3$ or type $C_3$}
\label{automorphisms of exchange graph of type $BC_3$}
\end{center}
\end{figure}
\end{exm}

\begin{exm}\label{exm:F_4 auto gp of exchange graph}
For cluster algebra of type $F_4$, let the quiver $Q$ of a seed $\S$ be
\begin{center}
{\begin{tikzpicture}
\node[] (C) at (-1.5,0)  {$1$};
\node[] (C) at (0,0)  {$2$};						
\node[] (C) at (1.5,0)  {$3$};
\node[] (C) at (3,0)  {$4.$};
\node[] (C) at (0.8,0.3)  {\qihao{(2,1)}};
\draw[<-,thick] (-0.2,0) -- (-1.3,0);
\draw[<-,thick] (0.2,0) -- (1.3,0);
\draw[->,thick] (1.7,0) -- (2.8,0);
\end{tikzpicture}}
\end{center}
Then $Aut(\A)\cong \mathbb{D}_7$ \cite{CZb15}.
The variables $x_1,x_2$,$x_3$ and the corresponding full subquiver of $Q$ form a seed $\S_1$ of type $B_3$, while $x_2,x_3$,$x_4$ and the corresponding full subquiver of $Q$ form a seed $\S_2$ of type $C_3$. By pinning down $\S$, rotating the graph $E_\A$ induces an automorphism $\sigma$ of $E_\A$, which exchanges the graph $E_{\A_{\S_1}}$ and the graph $E_{\A_{\S_2}}$. However $\sigma$ does not induces a cluster automorphism of $\A$, and $Aut(\A)\cong \mathbb{D}_7\subsetneqq \mathbb{D}_7\rtimes \Z_2 \cong Aut(E_\A)$.
\end{exm}

\begin{prop}\label{prop:3 points Dynkin type algebra}
Let $Q$ be a connected quiver with three vertices, which is finite type. Let $\S=(\x,Q)$ and $\S'=(\x',Q')$ be two seeds (not necessarily mutation equivalent with each other). If there is an isomorphism $\sigma:E_\A\to E_{\A'}$ such that $\sigma(\S)=\S'$, then $\S'=(\x',Q')$ is a finite type seed with $Q'$ connected, and
\begin{enumerate}
\item if $\S$ is of type $A_3$, then $Q'\cong Q$ (or $Q^{op}$);
\item if $\S$ is of type $B_3$ and $\S'$ is not of type $C_3$, then $Q'\cong Q$ (or $Q^{op}$);
\item if $\S$ is of type $C_3$ and $\S'$ is not of type $B_3$, then $Q'\cong Q$ (or $Q^{op}$).
\end{enumerate}
\end{prop}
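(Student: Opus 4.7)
The plan is to establish the proposition in three steps, each reducing to a more elementary question. First, since $Q$ is of finite Dynkin type, $E_\A$ has only finitely many vertices; as $\sigma$ is a graph isomorphism, $E_{\A'}$ is also finite, so $\A'$ is of finite type, and matching the $n$-regularity of the two graphs forces $\mathrm{rank}(\A')=3$. Hence $\S'=(\x',Q')$ is a seed of a rank-three cluster algebra of finite type.

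Second, I would pin down the Dynkin type of $Q'$ from the identity $N(\ell^0_{\S'})=N(\ell^0_\S)$ supplied by Remark \ref{rem: layers of geodesic loops}(4). Each entry of this multiset equals $h+2$, where $h$ is the Coxeter number of the rank-two Dynkin type of the full subquiver of $Q$ supported on the two cluster variables complementary to the deleted one: disconnected pairs contribute $4$, $A_2$ contributes $5$, both $B_2$ and $C_2$ contribute $6$, and $G_2$ contributes $8$. A short case check then shows that disconnected rank-three finite-type quivers always produce multisets with at least two $4$'s, whereas connected ones contain at most one, so $Q'$ must be connected. Within the connected classes, $A_3$ quivers give $\{4,5,5\}$ (linear orientation) or $\{5,5,5\}$ ($3$-cycle orientation), whereas $B_3$ and $C_3$ quivers always produce a multiset containing a $6$: the symmetrizer $D=\mathrm{diag}(1,1,2)$ together with indecomposability of the exchange matrix forces at least one double-weighted edge, hence a $B_2$ or $C_2$ rank-two subquiver, in every seed. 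Consequently $\S$ and $\S'$ share Dynkin type up to the $B_3\leftrightarrow C_3$ ambiguity, which is exactly what the extra hypotheses of (2) and (3) eliminate.

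Third, I would upgrade type coincidence to the desired quiver congruence. Since any two coefficient-free cluster algebras of the same Dynkin type are isomorphic, I fix a cluster-algebra isomorphism $\varphi:\A\to\A'$ and let $\varphi_E:E_\A\to E_{\A'}$ denote the induced graph isomorphism; then $\tilde\sigma=\varphi_E^{-1}\circ\sigma$ is an automorphism of $E_\A$. Examples \ref{exm:A3 type exchange graph} and \ref{exm:B3 C3 type exchange graph} already establish $\Aut(E_\A)\cong\Aut(\A)$ when $\A$ is of type $A_3$, $B_3$, or $C_3$, so $\tilde\sigma$ is induced by a cluster automorphism of $\A$. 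Applying Lemma \ref{lem:equivalent discription of clus-auto} to this cluster automorphism and transporting the conclusion back through $\varphi$ yields $Q'\cong Q$ or $Q'\cong Q^{op}$.

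The main obstacle I expect is the case analysis in the second step, specifically the claim that every quiver in the $B_3$ or $C_3$ mutation class carries a rank-two $B_2$ or $C_2$ full subquiver. The symmetrizer argument above gives a clean proof in principle, but making it rigorous — or else directly enumerating the mutation classes via Definition \ref{def: mutation} — is the only non-formal piece of the argument; everything else is bookkeeping around the explicit pictures drawn in the preceding examples.
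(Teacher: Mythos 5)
Your argument is correct, and its first two steps --- finiteness of $E_{\A'}$, rank-matching via $n$-regularity, and reading off the type of $Q'$ from the multiset $N(\ell^0_{\S'})=N(\ell^0_\S)$ of polygon sizes $h+2$ --- are exactly the paper's proof, which tabulates these multisets for $A_3$, $B_3/C_3$ and the disconnected rank-three types and concludes by Remark \ref{rem: layers of geodesic loops}(4). Where you genuinely diverge is in upgrading ``same Dynkin type'' to ``$Q'\cong Q$ or $Q^{op}$''. The level-zero multiset alone does not separate, say, the linear orientation $1\to 2\to 3$ from the alternating one $1\to 2\leftarrow 3$ (both give $\{4,5,5\}$), and the paper's written proof is silent on this point: it implicitly leans on the higher layers $N(\ell^m_\S)$, $m\geqslant 1$, computed at specific basepoints in Examples \ref{exm:A3 type exchange graph} and \ref{exm:B3 C3 type exchange graph}. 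You instead compose $\sigma$ with a cluster-algebra isomorphism $\A\to\A'$ (available once the types agree) to reduce to an automorphism of $E_\A$, then invoke the already-established isomorphisms $Aut(E_\A)\cong Aut(\A)$ for the rank-three types together with Lemma \ref{lem:equivalent discription of clus-auto}; this is not circular, since those examples precede the proposition, and it has the advantage of treating every orientation in the mutation class uniformly, whereas the paper's route stays purely combinatorial but needs the higher layers to finish. Your symmetrizer argument that every quiver in the $B_3$ or $C_3$ mutation class contains a $B_2$ or $C_2$ full subquiver (adjacent vertices with unequal symmetrizer entries force $|b_{ij}|\neq |b_{ji}|$, and finite type bounds $|b_{ij}b_{ji}|$ by $3$, excluding $G_2$) is sound and is a cleaner justification than the implicit enumeration behind the paper's list of multisets.
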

\begin{proof}
Clearly, since $E_\A'\cong E_\A$ is of finite, $Q'$ is a Dynkin type quiver with three vertices. If $Q$ is of type $A_3$, then by Example \ref{exm:A3 type exchange graph}, $$N(\ell^0_\S)=\{4,5,5\}~~\textrm{or}~~\{5,5,5\}.$$
If $Q$ is of type  $B_3$ (or $C_3$), then from Example \ref{exm:B3 C3 type exchange graph},
$$N(\ell^0_\S)=\{4,5,6\}~~\textrm{or}~~\{5,6,6\}.$$
If $Q'$ is a union of a quiver of type $A_2$ and a point, then from Example \ref{exm:rank 2 case},
$$N(\ell^0_\S)=\{4,4,5\}.$$
If $Q'$ is a union of a quiver of type $B_2$ (or $C_2$) and a point, then from Example \ref{exm:rank 2 case},
$$N(\ell^0_\S)=\{4,4,6\}.$$
If $Q'$ is a union of a quiver of type $G_2$ and a point, then from Example \ref{exm:rank 2 case},
$$N(\ell^0_\S)=\{4,4,8\}.$$
Thus we get the proof by Remark \ref{rem: layers of geodesic loops}.
\end{proof}

\begin{exm}\label{exm:infinite rank 3 case}
Let $Q$ be the quiver in Figure \ref{quiver of type $tA2$}, we call it of type $\tilde{A}_2$. Then it is not hard to see that if a quiver in the mutation class of $Q$ is not isomorphic to $Q$, then it must be isomorphic to the quiver $Q'$ in Figure \ref{quiver of type $tA2$}. Let $\A$ be a cluster algebra with an initial seed $\S=(\{x_1,x_2,x_3\},Q)$, similarly to above examples, to show that $Aut(\A)\cong Aut(E_\A)$, we only need to notice that:
\begin{align*}
~N(\ell^0_\S)=\{5,5,\infty\},\\
N(\ell^0_{\S'})=\{5,5,5\},
\end{align*}
where $\S'$ is a seed of $\A$ with quiver isomorphic to $Q'$.
In fact, from section 3.3 in \cite{ASS12}, $Aut(\A)=<r_1,r_2 | r_1r_2=r_2r_1, {r_1}^{2}=r_2>\rtimes<\sigma | \sigma^{2}=1>\cong \mathbb{H}_{2,1}\rtimes \Z_2$, where
\begin{equation}
r_1: \begin{cases}
x_1 \mapsto x_3 \\
x_2 \mapsto \mu_1(x_1) \\
x_3 \mapsto x_2
\end{cases}
\end{equation}

\begin{equation}
r_2: \begin{cases}
x_1 \mapsto x_2 \\
x_2 \mapsto \mu_3\mu_1(x_3) \\
x_3 \mapsto \mu_1(x_1)
\end{cases}
\end{equation}

\begin{equation}
\sigma: \begin{cases}
x_1 \mapsto x_2 \\
x_2 \mapsto x_1 \\
x_3 \mapsto x_3
\end{cases}
\end{equation}
Thus $Aut(E_\A)\cong \mathbb{H}_{2,1}\rtimes \Z_2$

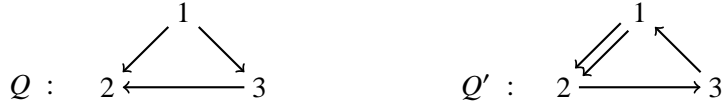
\begin{figure}
\begin{center}
{\begin{tikzpicture}

\node[] (C) at (-4,-1)
						{$Q~:$};

\node[] (C) at (-2,0)
						{$1$};

\node[] (C) at (-3,-1)
						{$2$};
\node[] (C) at (-1,-1)
						{$3$};

\draw[->,thick] (-2.2,-0.2) -- (-2.8,-0.8);
\draw[->,thick] (-1.8,-0.2) -- (-1.2,-0.8);
\draw[<-,thick] (-2.8,-1) -- (-1.2,-1);

\node[] (C) at (-4,-3)
						{$Q'~:$};
\node[] (C) at (-2,-2)
						{$1$};

\node[] (C) at (-3,-3)
						{$2$};
\node[] (C) at (-1,-3)
						{$3$};

\draw[->,thick] (-2.25,-2.15) -- (-2.85,-2.75);
\draw[->,thick] (-2.15,-2.25) -- (-2.75,-2.85);
\draw[<-,thick] (-1.8,-2.2) -- (-1.2,-2.8);
\draw[->,thick] (-2.8,-3) -- (-1.2,-3);
\end{tikzpicture}}
\end{center}
\begin{center}
\caption{quivers of type $\tilde{A}_2$}
\label{quiver of type $tA2$}
\end{center}
\end{figure}
\end{exm}

\begin{exm}\label{exm:infinite rank 3 case 2}
Let $\A$ be a cluster algebra from an once punctured torus, we call it a cluster algebra of type $T_3$, then it is of finite mutation type with quiver always isomorphic to the quiver in Figure \ref{quiver of type $T3$}. Then by Lemma \ref{lem:key lemma}, we have $Aut(\A)\cong Aut(E_\A)$.
\begin{figure}
\begin{center}
{\begin{tikzpicture}

\node[] (C) at (0,0)
						{$1$};

\node[] (C) at (-1,-1)
						{$2$};
\node[] (C) at (1,-1)
						{$3$};

\draw[->,thick] (-0.25,-0.15) -- (-0.85,-0.75);
\draw[->,thick] (-0.15,-0.25) -- (-0.75,-0.85);
\draw[<-,thick] (0.15,-0.25) -- (0.75,-0.85);
\draw[<-,thick] (0.25,-0.15) -- (0.85,-0.75);
\draw[->,thick] (-0.8,-1.07) -- (0.8,-1.07);
\draw[->,thick] (-0.8,-0.93) -- (0.8,-0.93);
\end{tikzpicture}}
\end{center}
\begin{center}
\caption{quiver of type $T_3$}
\label{quiver of type $T3$}
\end{center}
\end{figure}
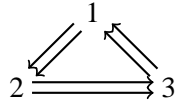
\end{exm}

\begin{cor}
Let $\A$ and $\A'$ be two cluster algebras of finite type, or of skew-symmetric finite mutation type, with rank equal to $2$ or $3$. Let $\S=(\x,B)$ and $\S'=(\x',B')$ be two seeds of $\A$ and $\A'$ respectively.
If $N(\ell^k_\S)=N(\ell^k_{\S'})$ for any $k\in \mathbb{Z}_{\geq 0}$, then there exists an isomorphism $\Phi: E_\A\to E_{\A'}$ such that $\Phi(\x)=\x'$.
\end{cor}
\begin{proof}
This follows from Example \ref{exm:A3 type exchange graph}, Example \ref{exm:B3 C3 type exchange graph}, Example \ref{exm:infinite rank 3 case} and Example \ref{exm:infinite rank 3 case 2}
\end{proof}
We expect the result in the Corollary is true for any finite type cluster algebras and finite mutation type cluster algebras. It means that for any seed $\S$, the set $N(\ell^k_\S)$ characterizes the exchange graph.

\begin{lem}\label{lem: mutation finite infinite rank 3 types}
Let $Q$ be a connected skew-symmetric quiver of finite mutation type.
\begin{enumerate}
\item If there are $3$ vertices in $Q$, then $Q$ is one of the following types:
\begin{enumerate}
\item [(1)] $A_3$ type;
\item [(2)] $\tilde{A}_2$ type;
\item [(3)] $T_3$ type.
\end{enumerate}
\item If there are at least $4$ vertices in $Q$, then any full subquiver of $Q$ with three vertices is of type $A_3$ or of type $\tilde{A}_2$.
\end{enumerate}
\end{lem}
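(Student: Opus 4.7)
The plan is to invoke the classification of connected skew-symmetric quivers of finite mutation type recalled in Section~\ref{sec:finite types and finite mutation types}: every such $Q$ is either of rank two, block-decomposable (equivalently, the signed adjacency quiver of an ideal triangulation of a bordered marked Riemann surface in the sense of \cite{FST08}), or one of the eleven exceptional quivers depicted in Figure~\ref{fig:exceptional mutation finite types}. Both parts of the lemma reduce to a case analysis along this trichotomy (with part~(2) understood for full three-vertex subquivers that happen to be connected, since a disconnected subquiver is not of any of the listed types anyway).

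For part~(1), suppose $Q$ has exactly three vertices. A glance at Figure~\ref{fig:exceptional mutation finite types} shows that each exceptional quiver has at least six vertices, and the rank-two case has only two, so $Q$ must be block-decomposable and arise from an ideal triangulation of a marked surface $(S,M)$ with exactly three interior arcs. Using the standard formula $6g+3b+3p+c-6=3$ for the number of interior arcs in such a triangulation (with $g$ the genus, $b$ the number of boundary components, $p$ the number of punctures, and $c$ the number of marked points on the boundary), I would enumerate the admissible tuples $(g,b,p,c)$ subject to the FST non-degeneracy restrictions. The surviving possibilities are the disc with six boundary marked points (yielding $A_3$), the annulus with three marked points split as $(1,2)$ across its two boundaries (yielding $\tilde{A}_2$), and the once-punctured torus (yielding $T_3$); the once-punctured disc with three boundary marks produces the same $D_3\cong A_3$ type and no new quiver class, and all remaining tuples violate the admissibility hypotheses.

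For part~(2), assume $Q$ has at least four vertices and let $Q'$ be a connected full three-vertex subquiver. Freezing the remaining vertices realises $Q'$ as the principal part of a cluster algebra with coefficients; since mutation in directions internal to $Q'$ only acts on the principal part as it would in the absence of coefficients, $Q'$ is itself of finite mutation type. By part~(1) it therefore suffices to exclude the case $Q'\cong T_3$. If $Q$ is one of the eleven exceptional quivers, this is a finite inspection of Figure~\ref{fig:exceptional mutation finite types}: none of them contains three vertices pairwise joined by double arrows in a cyclic orientation. If $Q$ is block-decomposable with underlying surface $(S,M)$, then a $T_3$-subquiver on arcs $\{a,b,c\}$ would force every pair of these arcs to share two common triangles whose orientations cohere so as to produce the cyclic double arrows. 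Since each arc of the triangulation lies in at most two triangles, $a,b,c$ must belong to the same pair $\{T_1,T_2\}$ of triangles, each having edge-set $\{a,b,c\}$. The region $T_1\cup T_2$ is then a closed, boundaryless, connected sub-surface of $S$, and hence equals $S$ by connectedness; gluing two triangles along three matched edge-pairs via an orientable identification recovers precisely the once-punctured torus, forcing $Q=Q'$ to have only three vertices, which contradicts our assumption.

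The step requiring the most scrutiny is the surface-geometric argument closing part~(2): one has to confirm that the orientation compatibility dictated by the $T_3$ double-arrow pattern really forces the orientable torus gluing rather than the Klein bottle, and that the constraint of $a,b,c$ appearing only in $\{T_1,T_2\}$ strictly forbids any further triangle of the triangulation from being incident to them. The enumeration in part~(1) similarly relies on correctly eliminating small degenerate configurations (most notably the three-punctured sphere, which also meets the arc-count equation) through the FST admissibility conditions of \cite{FST08}.
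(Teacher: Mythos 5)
Your argument is correct, and it reaches the same conclusion by the surface model rather than the block model. The paper's own proof is a one-line appeal to the block calculus: for part (1) it says the result is ``a straightforward check by gluing the blocks'' of Figure \ref{fig:blocks-I-V}, and for part (2) that ``there is no way to glue a quiver of type $T_3$ with other blocks'' (i.e.\ $T_3$ is two blocks of type II identified along all three outlets, leaving no free outlet). You instead pass through the other side of the dictionary of \cite{FST08} (Theorem 13.3): you enumerate the marked surfaces with exactly three arcs via $6g+3b+3p+c-6=3$, and you exclude $T_3$ as a proper full subquiver by observing that a cyclic triple of double arrows forces two triangles sharing all three sides, whose union is already a closed surface. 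The two routes are logically equivalent, but yours is more explicit and actually supplies the verifications the paper leaves implicit; in particular your geometric exclusion of $T_3$ is cleaner than the block-gluing phrasing, since it does not require tracking how a block decomposition of $Q$ restricts to a full subquiver. Two small points you correctly flag and should keep: the statement must be read for \emph{connected} three-vertex full subquivers (otherwise $A_2\sqcup A_1$ occurs already in type $A_4$), and the thrice-punctured sphere, which satisfies the arc count, is ruled out by the admissibility conventions of \cite{FST08}. One further detail worth a sentence in a polished write-up: the claim that a double arrow between arcs $a$ and $b$ forces two common non-self-folded triangles needs the remark that a self-folded configuration cannot contribute to a $T_3$ pattern, so the orientation bookkeeping you describe does close the argument.
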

\begin{proof}
\begin{enumerate}
\item From the classification of cluster algebras of finite mutation type, $Q$ must be block-decomposable, then the proof is a straightforward check by gluing the blocks in \ref{fig:blocks-I-V}.
\item We only need to notice that a quiver of type $T_3$ is obtained by gluing two blocks of type II in Figure \ref{fig:blocks-I-V}, and thus one can not further glue it with a block to obtain a connected quiver of finite mutation type.
\end{enumerate}
\end{proof}
It is clear that if for any quiver in the mutation equivalent class of $Q$, the number of arrows between any two vertices is at most $2$, then $Q$ is of finite mutation type. The above lemma shows that the inverse statement is also true for the cases when there are at least 3
vertices, that is, we have the following corollary, which has been stated in \cite[Corollary 8]{DO08}.
\begin{cor}\label{cor:equ of finite mutation type}
A connected quiver $Q$ with at least 3 vertices is of finite mutation type if and only if for any quiver in its mutation class the number of arrows between any two vertices is at most $2$.
\end{cor}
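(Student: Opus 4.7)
The plan is to prove both implications separately, using Lemma \ref{lem: mutation finite infinite rank 3 types} as the main engine.

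First, for the easy direction: if every quiver in the mutation class of $Q$ has arrow multiplicities at most $2$, then since there are only finitely many (isomorphism classes of) quivers on $n$ vertices with bounded arrow multiplicities, the mutation class must be finite, so $Q$ is of finite mutation type. This direction is already asserted in the paragraph preceding the corollary.

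For the non-trivial direction, assume $Q$ is connected, of finite mutation type, with at least three vertices, and let $Q'$ be any quiver mutation-equivalent to $Q$. Since finite mutation type is preserved under mutation, $Q'$ is also of finite mutation type, and it has the same number of vertices as $Q$. I would split into two cases according to whether $Q$ has exactly three vertices or at least four.

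If $Q'$ has exactly three vertices, apply Lemma \ref{lem: mutation finite infinite rank 3 types}(1) to conclude that $Q'$ is of type $A_3$, $\tilde A_2$, or $T_3$. A direct inspection of Figure \ref{fig:quivers of finite types} (for $A_3$), Figure \ref{quiver of type $tA2$} (for $\tilde A_2$, whose mutation class consists only of the two pictured quivers, as discussed in Example \ref{exm:infinite rank 3 case}), and Figure \ref{quiver of type $T3$} (for $T_3$, whose mutation class consists of a single quiver up to isomorphism, as noted in Example \ref{exm:infinite rank 3 case 2}) shows that every quiver in these mutation classes has all arrow multiplicities bounded by $2$.

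If $Q'$ has at least four vertices, fix any two vertices $v_1, v_2$ of $Q'$ and choose a third vertex $v_3$. By Lemma \ref{lem: mutation finite infinite rank 3 types}(2) applied to $Q'$, the full subquiver on $\{v_1,v_2,v_3\}$ is of type $A_3$ or $\tilde A_2$. In either type the number of arrows between any two vertices is at most $2$, so in particular the number of arrows between $v_1$ and $v_2$ is at most $2$. Since $v_1,v_2$ and $Q'$ were arbitrary, this completes the proof. There is no real obstacle here; all of the substantial work is absorbed into Lemma \ref{lem: mutation finite infinite rank 3 types}, and the corollary is essentially a repackaging of that lemma together with the basic observation that being of finite mutation type is invariant under mutation.
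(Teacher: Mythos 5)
Your argument is correct and coincides with the paper's (the paper gives no explicit proof, only the remark preceding the corollary that the easy direction is clear and the hard direction follows from Lemma \ref{lem: mutation finite infinite rank 3 types}); you have simply written out that intended argument in full, splitting on the number of vertices and checking the multiplicities in the $A_3$, $\tilde A_2$ and $T_3$ mutation classes. No gaps.
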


\begin{prop}\label{prop: mutation finite infinite rank 3 types}
Let $Q$ be a connected skew-symmetric quiver with three vertices, which is of finite mutation type. Let $\S=(\x,Q)$ and $\S'=(\x',Q')$ be two seeds. If there is an isomorphism $\sigma:E_\A\to E_{\A'}$ such that $\sigma(\S)=\S'$, then $Q'\cong Q$ or  $Q'\cong Q^{op}$.
\end{prop}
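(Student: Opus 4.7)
The plan is to invoke Lemma \ref{lem: mutation finite infinite rank 3 types}(1) together with the invariance of the geodesic-loop data $N(\ell^m_\S)$ under exchange-graph isomorphism (Remark \ref{rem: layers of geodesic loops}(4)): by Lemma \ref{lem: mutation finite infinite rank 3 types}(1) the quiver $Q$ is of type $A_3$, $\tilde{A}_2$, or $T_3$, so I would reduce the proposition to these three cases. The $A_3$ case is an immediate consequence of Proposition \ref{prop:3 points Dynkin type algebra}(1); the remaining two cases I would treat by matching $N(\ell^0_{\S'}) = N(\ell^0_\S)$ and then classifying $Q'$ by its multiplicity profile together with its orientation.

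For $Q$ of type $\tilde{A}_2$, the mutation class contains exactly two isomorphism classes up to opposite (see Example \ref{exm:infinite rank 3 case}): the acyclic triangle with three single arrows, satisfying $N(\ell^0_\S) = \{5,5,5\}$, and the cyclic triangle with one doubled arrow, satisfying $N(\ell^0_\S) = \{5,5,\infty\}$. The value $N(\ell^0_{\S'})$ then pins down the multiplicity profile of $Q'$: a pentagonal loop forces a single arrow on the corresponding pair of vertices, a line loop forces at least two arrows. In the $\{5,5,5\}$ subcase the only triangle orientations with three single arrows are the cyclic one (type $A_3$, excluded since $E_{\A'} \cong E_\A$ is infinite) and the acyclic one, the latter unique up to relabelling and reversal; hence $Q' \cong Q$ or $Q' \cong Q^{op}$. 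In the $\{5,5,\infty\}$ subcase $Q'$ is a triangle with multiplicities $\{1,1,m\}$ for some $m \geq 2$, and I would eliminate the acyclic $\{1,1,2\}$ orientations and all $m \geq 3$ configurations --- which by Corollary \ref{cor:equ of finite mutation type} are not of finite mutation type --- by comparing the next layer $N(\ell^1_{\S'})$: a single mutation at an appropriate vertex produces an edge of multiplicity at least $3$, giving a line geodesic loop at a neighbouring seed where the cyclic $\tilde{A}_2$ quiver yields only pentagons.

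For $Q$ of type $T_3$ every pair of vertices of the quiver in Figure \ref{quiver of type $T3$} is joined by a double arrow, so $N(\ell^0_\S) = \{\infty,\infty,\infty\}$ and every pair of vertices of $Q'$ supports at least two arrows. A parallel comparison of higher layers, again via Corollary \ref{cor:equ of finite mutation type}, would rule out any edge of multiplicity strictly exceeding two; Lemma \ref{lem: mutation finite infinite rank 3 types}(1) then forces $Q'$ into the $T_3$ mutation class, which by Example \ref{exm:infinite rank 3 case 2} consists of a single quiver up to orientation reversal, so that $Q' \cong Q$ or $Q' \cong Q^{op}$.

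The main obstacle is the elimination of non-finite-mutation-type candidates for $Q'$ whose local invariant $N(\ell^0_{\S'})$ coincides with that of the genuine $\tilde{A}_2$ or $T_3$ cases; distinguishing them requires passing from $N(\ell^0)$ to the higher layers $N(\ell^j)$ and exploiting the runaway growth of arrow multiplicities under iterated mutation (the negative direction of Corollary \ref{cor:equ of finite mutation type}) to produce a visibly different pattern of geodesic loops at nearby seeds.
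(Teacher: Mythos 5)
Your reduction is essentially the paper's own: Lemma \ref{lem: mutation finite infinite rank 3 types}(1) splits $Q$ into types $A_3$, $\tilde{A}_2$ and $T_3$, the $A_3$ case is delegated to Proposition \ref{prop:3 points Dynkin type algebra}(1), and the remaining cases are settled by matching the invariants $N(\ell^m)$ of Examples \ref{exm:infinite rank 3 case} and \ref{exm:infinite rank 3 case 2} via Remark \ref{rem: layers of geodesic loops}(4). (Incidentally, your values --- $N(\ell^0)=\{5,5,5\}$ for the acyclic all-single triangle and $\{5,5,\infty\}$ for the cyclic triangle with one doubled arrow --- are the correct ones; Example \ref{exm:infinite rank 3 case} states them the other way around.)

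Where you go beyond the paper is in trying to rule out candidates $Q'$ that are not of finite mutation type, and there your proposed fix does not close in the $T_3$ case. For $\tilde{A}_2$ it does work: an acyclic $(1,1,2)$ triangle, or any $(1,1,m)$ triangle with $m\geq 3$, acquires after one mutation an edge of multiplicity at least $3$ between a pair of vertices where the genuine cyclic $(1,1,2)$ quiver keeps a single arrow, so $N(\ell^1_{\S'})$ contains an $\infty$ instead of the value $\{5,5,5\}$, and Remark \ref{rem: layers of geodesic loops}(4) applies. But take $Q'$ to be the cyclic triangle with every arrow tripled: every quiver in its mutation class has all three edge multiplicities at least $3$, so \emph{every} geodesic loop of $E_{\A'}$ is an infinite line and $N(\ell^m_{\S'})$ consists only of $\infty$'s for every $m$ --- exactly the same data as for the Markov quiver of Figure \ref{quiver of type $T3$}. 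The layer invariants are completely blind here, so ``comparing higher layers via Corollary \ref{cor:equ of finite mutation type}'' cannot eliminate these candidates; indeed, if both exchange graphs are $3$-regular trees (which is what one expects when no rank-two subalgebra is ever of finite type), an isomorphism $\sigma$ with $\sigma(\S)=\S'$ genuinely exists. The paper's (implicit) way out is that the proposition is only ever applied to a $Q'$ arising as a connected full subquiver of a quiver of a mutation-finite algebra, so that Lemma \ref{lem: mutation finite infinite rank 3 types}(1) applies to $Q'$ as well and the elimination step is unnecessary. If you add the hypothesis that $Q'$ is also connected, skew-symmetric and of finite mutation type, your argument (with the elimination step deleted) coincides with the paper's and is complete; without that hypothesis, the $T_3$ case remains open in both your write-up and the paper's.
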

\begin{proof}
Similar to Proposition \ref{prop:3 points Dynkin type algebra}, this follows from Lemma \ref{lem: mutation finite infinite rank 3 types}, Example \ref{exm:infinite rank 3 case 2}, Example \ref{exm:A3 type exchange graph} and Example \ref{exm:infinite rank 3 case}.
\end{proof}

\subsection{General cases}
\begin{thm}\label{thm:auto gp of finite type ex graph}
Let $\A$ be a cluster algebra of finite type. Assume that it is not of type $F_4$, let $\S=(\x,Q)$ be a labeled seed of $\A$, where $Q$ is a connected quiver with at least three vertices. Then we have $Aut(\A)\cong Aut(E_\A)$.
\end{thm}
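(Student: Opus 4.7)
The plan is to show that every $\sigma\in\Aut(E_\A)$ is induced by some cluster automorphism, which together with the natural embedding $\Aut(\A)\hookrightarrow\Aut(E_\A)$ gives the isomorphism. Fix a labeled seed $\S=(\x,B)$ and let $\S'=\sigma(\S)=(\x',B')$ with $\x'=\sigma_\Delta(\x)$. By Lemma \ref{lem:key lemma} it suffices to verify that $B\cong B'$ or $B\cong -B'$ under the relabeling $\sigma_\Delta\colon\x\to\x'$.

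I would reduce the question to rank $3$ via the link construction of Section \ref{Layers of geodesic loops}. For any three-element subset $\x_0\subseteq\x$, the induced subgraph $\Gamma_{\x\setminus\x_0}\subseteq E_\A$ is the exchange graph of the rank-$3$ subalgebra whose quiver is the full subquiver $Q|_{\x_0}$. Since $\sigma$ is the graph automorphism induced by a simplicial automorphism of the cluster complex $\Delta$, it carries $\Gamma_{\x\setminus\x_0}$ isomorphically onto $\Gamma_{\x'\setminus\sigma_\Delta(\x_0)}$ and sends $\S$ to $\S'$. This is precisely the setting of Proposition \ref{prop:3 points Dynkin type algebra}, so for every connected $3$-subquiver $Q|_{\x_0}$ one obtains $Q'|_{\sigma_\Delta(\x_0)}\cong Q|_{\x_0}$ or $Q|_{\x_0}^{op}$.

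The potential failure of Proposition \ref{prop:3 points Dynkin type algebra} is the $B_3\leftrightarrow C_3$ swap, and I would rule it out using the mutation invariance of the skew-symmetrizer $D$: up to a common factor, the local pattern of weights around the double edge of a $B_3$-subquiver is $(1,1,2)$ while that of a $C_3$-subquiver is $(2,2,1)$. Scanning Figure \ref{fig:quivers of finite types}, for types $A_n$, $D_n$, $E_6$, $E_7$, $E_8$ the weights are constant and no such $3$-subquiver occurs; for $B_n$ ($n\geq 3$) the weights are $(1,\dots,1,2)$ and only the $B_3$ pattern appears; for $C_n$ ($n\geq 3$) they are $(2,\dots,2,1)$ and only the $C_3$ pattern appears; it is precisely $F_4$, with weights $(1,1,2,2)$, that produces both patterns. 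Since $F_4$ is excluded by hypothesis, every connected $3$-subquiver yields a definite direct-or-inverse isomorphism.

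Finally I would glue these local isomorphisms together. Two connected $3$-subquivers of $Q$ that share a pair of vertices joined by a non-zero arrow are forced to the same sign, because the $\sigma_\Delta$-image of that arrow is determined by each subquiver independently and the two determinations must agree. A direct inspection of Figure \ref{fig:quivers of finite types} shows that for every connected Dynkin quiver of rank $\geq 3$ (other than $F_4$) the connected $3$-subquivers form a chain linked by such overlaps, so a single global sign exists; hence $B\cong B'$ or $B\cong -B'$ under $\sigma_\Delta$, and Lemma \ref{lem:key lemma} produces the required cluster automorphism. The main obstacle is the third step: neutralizing the $B_3\leftrightarrow C_3$ symmetry, which is exactly the phenomenon responsible for the $F_4$ exception observed in Example \ref{exm:F_4 auto gp of exchange graph}.
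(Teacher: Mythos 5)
Your proposal is correct and follows essentially the same route as the paper's proof: restrict $\sigma$ to the rank-$3$ links $\Gamma_{\x\setminus\x_0}$, invoke Proposition \ref{prop:3 points Dynkin type algebra}, use the non-$F_4$ hypothesis to rule out the $B_3\leftrightarrow C_3$ ambiguity, and glue the local identifications along overlapping $3$-subquivers to conclude $Q(\sigma(\x))\cong Q$ or $Q^{op}$, so that Lemma \ref{lem:key lemma} applies. Your skew-symmetrizer computation and the explicit sign-propagation argument are sound elaborations of two steps the paper states only tersely.
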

\begin{proof}
We need to show that $Aut(E_\A)\subseteq Aut(\A)$. Let $\Phi$ be any automorphism of $E_\A$. Then it induces an automorphism $\phi$ of the complex $\Delta$, in particular, which gives a permutation on the cluster variable set $\X$.
Let $\x'\subseteq \x$ be a $3$-dimensional complex, and let $Q(\x')$ be the full subquiver of $Q(\x)$ with vertices indexed by the variables in $\x'$. Let $\A'$ be the cluster algebra defined by the seed $\S'=(\x',Q(\x'))$.

Noticing that since $\phi$ is an automorphism of a complex, it maps a simplex to a simplex, and thus induces a bijection from $\X_{\x'}=\{\alpha\in \X-(\x \setminus \x') : \x \setminus \x'\cup \{\alpha\}\in \Delta\}$ to $\X_{\phi(\x')}=\{\alpha\in \X-\phi(\x \setminus \x') : \phi(\x \setminus \x')\cup \{\alpha\}\in \Delta\}$, and also induces an isomorphism $\phi_{\x'}$ from the link $\Delta_{\x'}$ to the link $\Delta_{\phi(\x')}$. Moreover, the duality of the isomorphism $\phi_{\x'}$ gives an isomorphism between the dual graphs of the complexes, that is we have an isomorphism
\begin{equation}\label{eq:iso-on-dual-graphs}
\Phi_{\x'}: \Gamma_{\x'}\to \Gamma_{\phi(\x')}.
\end{equation}
Let $\overline{\S}'=(\phi(\x'),Q(\phi(\x')))$ be a seed, where $Q(\phi(\x'))$ is the full subquiver of $Q(\Phi(\x))$ whose vertices are those labeled by elements in $\phi(\x')$. Let $\overline{\A}'$ be the cluster algebra defined by $\overline{\S}'$. As showed in the beginning of subsection \ref{Layers of geodesic loops}, there are isomorphisms $\Gamma_{\x'}\cong E_{\A'}$ and $\Gamma_{\phi(\x')}\cong E_{\overline{\A}'}$. Then combining with the isomorphism \eqref{eq:iso-on-dual-graphs}, we have $E_{\A'}\cong E_{\overline{\A}'}$. Since $\A$ is not of type $F_4$, if $Q(\x')$ is of type $B_3$ (type $C_3$ respectively), then $Q(\phi(\x'))$ is not of type $C_3$ (type $B_3$ respectively). Thus by Proposition \ref{prop:3 points Dynkin type algebra}, $Q(\x'))\cong Q(\phi(\x'))$ or $Q(\x')\cong {Q(\phi(\x'))}^{op}$.

Let $\x'=\{x_1,x_2,x_3\}\subseteq \x$ and $\x''=\{x_2,x_3,x_4\}\subseteq \x$ be two 3-dimensional complexes with exactly two common elements.
By above discussion, we have $Q(\x'))\cong Q(\phi(\x'))$ or $Q(\x')\cong {Q(\phi(\x'))}^{op}$, and $Q(\x''))\cong Q(\phi(\x''))$ or $Q(\x'')\cong {Q(\phi(\x''))}^{op}$. Now assume $b_{x_2x_3}\neq 0$, that is there exists at least one arrow in $Q(\x)$ between the vertices labeled by $x_2$ and $x_3$, then simultaneously we have $Q(\x'))\cong Q(\phi(\x'))$ and $Q(\x''))\cong Q(\phi(\x''))$, or $Q(\x')\cong {Q(\phi(\x'))}^{op}$ and $Q(\x'')\cong {Q(\phi(\x''))}^{op}$. Finally, due to the arbitrariness of the choice of $\x'$ and the connectness of the quiver, one may show that $Q(\Phi(\x))\cong Q$ or $Q(\Phi(\x))\cong Q^{op}$.
See the inductive process in the following picture:

\[
{\begin{tikzpicture}
\node[] (C) at (-.5,0)
						{$\cdots$};
\node[] (C) at (0,0)
						{$x_{0}$};
\node[] (C) at (1,0)
						{$x_1$};
\node[] (C) at (2,0)
						{$x_2$};
\node[] (C) at (3,0)
						{$x_3$};
\node[] (C) at (4,0)
						{$x_4$};
\node[] (C) at (5,0)
						{$x_5$};
\node[] (C) at (6,0)
						{$x_6$};
\node[] (C) at (6.5,0)
						{$\cdots$};

\node[] (C) at (2,-.5)
						{$\x'$};
\node[] (C) at (3,.6)
						{$\x''$};
\node[] (C) at (4,-0.5)
						{$\x'''$};

\draw[dashed,thick] (0.2,0.05) -- (0.8,0.05);
\draw[dashed,thick] (1.2,0.05) -- (1.8,0.05);
\draw[dashed,thick] (2.2,0.05) -- (2.8,0.05);
\draw[dashed,thick] (3.2,0.05) -- (3.8,0.05);
\draw[dashed,thick] (4.2,0.05) -- (4.8,0.05);
\draw[dashed,thick] (5.2,0.05) -- (5.8,0.05);

\draw [
    thick,
    decoration={
        brace,
        mirror,
        raise=0.25cm
    },
    decorate
](1.05,0) -- (2.95,0);
\draw [
    thick,
    decoration={
        brace,
        mirror,
        raise=0.25cm
    },
    decorate
](3.05,0) -- (4.95,0);
\draw [
    thick,
    decoration={
        brace,
        raise=0.25cm
    },
    decorate
](2.05,0) -- (3.95,0);
\end{tikzpicture}}
\]

Therefore $\Phi:E_{\A}\to E_{\A}$ induces an cluster automorphism of $\A$ by Lemma \ref{lem:key lemma}. Thus $Aut(E_\A)\subseteq Aut(\A)$ and we have $Aut(E_\A)\cong Aut(\A)$.
\end{proof}

\begin{rem}\label{rem:auto-gp-of-finite-type}
By combining the above theorem, Table 1 in \cite{ASS12} and Theorem 3.5 in \cite{CZb15}, we may compute the automorphism groups of the exchange graphs of cluster algebras of finite type, see the table \ref{table:auto group of exchange graphs}. The cases of rank two and type $F_4$ is computed in Example \ref{exm:rank 2 case} and Example \ref{exm:F_4 auto gp of exchange graph} respectively.
\end{rem}

\begin{thm}\label{thm:auto gp of finite mutation type ex graph}
Let $\A$ be a connected skew-symmetric cluster algebra of finite mutation type, then $Aut(\A)\cong Aut(E_\A)$.
\end{thm}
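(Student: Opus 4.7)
The plan mirrors the proof of Theorem \ref{thm:auto gp of finite type ex graph}. The inclusion $Aut(\A) \subseteq Aut(E_\A)$ is automatic, so the real content is the reverse containment. Fix $\sigma \in Aut(E_\A)$ with $\sigma(\S)=\S'$ for some seed $\S=(\x,Q)$. The rank two case follows from Example \ref{exm:infinite rank 2 case} (together with the $A_2$ instance of Example \ref{exm:rank 2 case}), and rank three is immediate from Proposition \ref{prop: mutation finite infinite rank 3 types} combined with Lemma \ref{lem:key lemma}. So assume the rank of $\A$ is at least four.

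For any three-element subset $\x' \subset \x$ whose induced full subquiver $Q|_{\x'}$ is connected, Lemma \ref{lem: mutation finite infinite rank 3 types}(2) guarantees that $Q|_{\x'}$ is of type $A_3$ or $\tilde{A}_2$; crucially, the $T_3$ case is forbidden for rank $\geq 4$. Just as in the proof of Theorem \ref{thm:auto gp of finite type ex graph}, $\sigma$ restricts to a graph isomorphism from the link $\Gamma_{\x\setminus\x'} \cong E_{\A'}$ onto $\Gamma_{\sigma(\x\setminus\x')} \cong E_{\A''}$, where $\A'$ and $\A''$ are the three-vertex cluster algebras with quivers $Q|_{\x'}$ and $Q''$, the full three-vertex subquiver of $Q(\sigma(\x))$ cut out by $\sigma(\x')$, respectively. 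Proposition \ref{prop: mutation finite infinite rank 3 types} then forces $Q'' \cong Q|_{\x'}$ or $Q'' \cong (Q|_{\x'})^{op}$.

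To globalize, I would argue that whenever two such connected three-vertex subquivers of $Q$ share two vertices joined by a non-zero arrow, their ``direct versus opposite'' verdicts must agree, because the common two-vertex subquiver cannot simultaneously be preserved and reversed. Since $Q$ is connected and has at least four vertices, these overlapping triples cover all arrows of $Q$ and link into a single propagation network, so one global choice extends: either $Q(\sigma(\x)) \cong Q$ or $Q(\sigma(\x)) \cong Q^{op}$. Lemma \ref{lem:key lemma} then upgrades $\sigma$ to a cluster automorphism realizing it, giving $Aut(E_\A) \subseteq Aut(\A)$.

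The step I expect to be the main obstacle is this last gluing argument: verifying that no arrow of $Q$ is isolated from the propagation network and that local verdicts never clash on an overlap. Connectivity of $Q$ together with the rank $\geq 4$ hypothesis should reduce this to a short combinatorial check on full subquivers, but one must be careful because in the skew-symmetric mutation-finite setting the quiver may contain double arrows from block type IV or V, and one should verify that such doubled edges also lie in some connected triple whose verdict forces agreement along the double arrow.
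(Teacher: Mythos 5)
Your proposal follows essentially the same route as the paper's proof: dispatch rank two and rank three via the Examples (equivalently Proposition \ref{prop: mutation finite infinite rank 3 types} with Lemma \ref{lem:key lemma}), and for rank at least four use Lemma \ref{lem: mutation finite infinite rank 3 types}(2) to exclude $T_3$ subquivers and then repeat the link-restriction and globalization argument of Theorem \ref{thm:auto gp of finite type ex graph}. The gluing step you flag is exactly the point the paper itself compresses into ``due to the arbitrariness of the choice of $\x'$ and the connectivity of $Q$,'' so your treatment is, if anything, slightly more explicit than the original.
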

\begin{proof}
If $\A$ is of finite type of rank $2$, that is, of type $A_2$, then the result follows from Example \ref{exm:rank 2 case}. If $\A$ is of infinite type of rank $2$, then the result follows from Example \ref{exm:infinite rank 2 case}.
When the rank of $\A$ is at least $3$, the proof is similar to the proof of Theorem \ref{thm:auto gp of finite type ex graph} by using the connectness of the cluster algebra and Proposition \ref{prop: mutation finite infinite rank 3 types}.
\end{proof}

\begin{cor}
Let $\A$ be a connected cluster algebra of finite type or of skew-symmetric finite mutation type, then an automorphism of $E_\A$ is determined by the image of any fixed seed $\S$ and the images of the seeds adjacent to $\S$. More precisely, let $\S=(\x,B)$ be a seed on $E_\A$, then an automorphism $\Phi: E_\A\to E_\A$ is determined by a pair $(\Sigma',\phi)$, where $\S'=(\x',B')$ is a seed on $E_\A$ and $\phi: \x\to \x'$ is a bijection such that $\Phi(\S)=\S'$ and $\Phi(\mu_{x}(\x))=\mu_{\phi(x)}(\x')$ for any $x\in \x$.
\end{cor}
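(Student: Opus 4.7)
The plan is to prove uniqueness directly: suppose $\sigma_1,\sigma_2\in Aut(E_\A)$ both realize the same pair $(\S',\sigma')$. Setting $\tau:=\sigma_2^{-1}\sigma_1$, one immediately has $\tau(\S)=\S$ and $\tau(\mu_x(\x))=\mu_x(\x)$ for every $x\in\x$, so it suffices to show that any such $\tau$ is the identity on $E_\A$.

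In the settings of Theorem \ref{thm:auto gp of finite type ex graph} and Theorem \ref{thm:auto gp of finite mutation type ex graph}, $\tau$ is induced by a cluster automorphism $\delta$ of $\A$ via $\tau=\delta_E$. Since a cluster automorphism commutes with mutations, $\tau(\S)=\S$ yields $\delta(\x)=\x$ as a seed, and then for each $x\in\x$
$$\mu_x(\x)\;=\;\tau(\mu_x(\x))\;=\;\delta(\mu_x(\x))\;=\;\mu_{\delta(x)}(\delta(\x))\;=\;\mu_{\delta(x)}(\x).$$
Because distinct mutation directions at $\S$ produce distinct vertices of $E_\A$, this forces $\delta(x)=x$ for every $x\in\x$. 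A $\Z$-algebra automorphism of $\A\subset\Q(\x)$ that fixes the cluster $\x$ pointwise is necessarily the identity on $\A$, so $\delta=\id$ and hence $\tau=\id$.

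The hypothesis of the corollary also covers cases outside those two theorems, namely rank two finite type (Example \ref{exm:rank 2 case}), skew-symmetric rank two infinite type (Example \ref{exm:infinite rank 2 case}), and type $F_4$ (Example \ref{exm:F_4 auto gp of exchange graph}); in these examples $Aut(E_\A)$ is explicitly described as $D_{h+2}$, $\Z\rtimes\Z_2$, and $D_7\rtimes\Z_2$ respectively. For each, $E_\A$ is a polygon, a line, or a graph with fully understood dihedral-with-flip symmetry, and one checks directly that specifying the image of $\S$ together with the induced bijection on the $n$ edges at $\S$ pins down the group element. So the conclusion holds uniformly.

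The main obstacle I expect is the step using that the $n$ seeds $\mu_x(\x)$, $x\in\x$, are pairwise distinct as vertices of $E_\A$, i.e.\ that the exchange graph is locally simple at $\S$. This is equivalent to the recognition of a seed by its cluster in the finite type and skew-symmetric settings under consideration, and is precisely what makes the passage from $\mu_{\delta(x)}(\x)=\mu_x(\x)$ to $\delta(x)=x$ legitimate; once this is granted the algebraic rigidity argument closes the proof with no further work.
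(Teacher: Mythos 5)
Your proof is correct and follows essentially the same route as the paper: invoke Theorems \ref{thm:auto gp of finite type ex graph} and \ref{thm:auto gp of finite mutation type ex graph} to realize the graph automorphism as a cluster automorphism, then use the rigidity of cluster automorphisms (determined by their values on one cluster) to conclude, treating the rank-two and $F_4$ cases by direct inspection. The only difference is one of detail: where the paper simply asserts that ``a cluster automorphism is determined by such a pair $(\S',\sigma')$,'' you spell out why, via the local simplicity of $E_\A$ at $\S$ and the fact that a $\Z$-algebra automorphism fixing $\x$ pointwise is the identity.
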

\begin{proof}
If $\A$ is of finite type of rank $2$ and type of $F_4$, then the conclusion is clearly. Otherwise, note that a cluster automorphism is determined by such a pair $(\S',\phi)$, thus the proof follows from Theorem \ref{thm:auto gp of finite type ex graph} and Theorem \ref{thm:auto gp of finite mutation type ex graph}.
\end{proof}

\section*{Acknowledgements}
Both authors thank for anonymous reviewer's careful reading and many valuable suggestions. In particular, the reference \cite{DO08} is pointed out by the reviewer.

\bigskip\bigskip

{\small Wen Chang\\
School of Mathematics and Information Science, Shaanxi Normal University, Xi'an 710062, China \&\\
Department of Mathematical Sciences, Tsinghua University, Beijing 10084, China\\
Email: {\tt changwen161@163.com}

	\bigskip
Bin Zhu\\
Department of Mathematical Sciences, Tsinghua University, Beijing 10084, China\\
Email: {\tt zhu-b@mail.tsinghua.edu.cn}}

\end{document}